\begin{document}

	
	\renewcommand{\d}{d}
	
	\newcommand{\E}{\mathbb{E}}
	\newcommand{\PP}{\mathbb{P}}
	\newcommand{\DD}{\mathbb{D}}
	\newcommand{\R}{\mathbb{R}}
	\newcommand{\cD}{\mathcal{D}}
	\newcommand{\cF}{\mathcal{F}}
	\newcommand{\cK}{\mathcal{K}}
	\newcommand{\N}{\mathbb{N}}
	\newcommand{\fracs}[2]{{ \textstyle \frac{#1}{#2} }}
	\newcommand{\sign}{\text{sign}}
		\newcommand{\ga}{\mathfrak{g}}
		\newcommand{\sk}{\mathfrak{s}}
	
	\newtheorem{theorem}{Theorem}[section]
	\newtheorem{lemma}[theorem]{Lemma}
	\newtheorem{assumption}[theorem]{Assumption}
	\newtheorem{coro}[theorem]{Corollary}
	\newtheorem{defn}[theorem]{Definition}
	\newtheorem{assp}[theorem]{Assumption}
	\newtheorem{expl}[theorem]{Example}
	\newtheorem{prop}[theorem]{Proposition}
	\newtheorem{proposition}[theorem]{Proposition}
	\newtheorem{corollary}[theorem]{Corollary}
	\newtheorem{rmk}[theorem]{Remark}
	\newtheorem{notation}[theorem]{Notation}

	\def\a{\alpha} \def\g{\gamma}
	\def\e{\varepsilon} \def\z{\zeta} \def\y{\eta} \def\o{\theta}
	\def\vo{\vartheta} \def\k{\kappa} \def\l{\lambda} \def\m{\mu} \def\n{\nu}
	\def\x{\xi}  \def\r{\rho} \def\s{\sigma}
	\def\p{\phi} \def\f{\varphi}   \def\w{\omega}
	\def\q{\surd} \def\i{\bot} \def\h{\forall} \def\j{\emptyset}
	
	\def\be{\beta} \def\de{\delta} \def\up{\upsilon} \def\eq{\equiv}
	\def\ve{\vee} \def\we{\wedge}
	
	\def\t{\tau}
	
	\def\F{{\cal F}}
	\def\T{\tau} \def\G{\Gamma}  \def\D{\Delta} \def\O{\Theta} \def\L{\Lambda}
	\def\X{\Xi} \def\S{\Sigma} \def\W{\Omega}
	\def\M{\partial} \def\N{\nabla} \def\Ex{\exists} \def\K{\times}
	\def\V{\bigvee} \def\U{\bigwedge}
	
	\def\1{\oslash} \def\2{\oplus} \def\3{\otimes} \def\4{\ominus}
	\def\5{\circ} \def\6{\odot} \def\7{\backslash} \def\8{\infty}
	\def\9{\bigcap} \def\0{\bigcup} \def\+{\pm} \def\-{\mp}
	\def\<{\langle} \def\>{\rangle}
	
	\def\lev{\left\vert} \def\rev{\right\vert}
	\def\1{\mathbf{1}}

	\newcommand\wD{\widehat{\D}}
	\newcommand\EE{\mathbb{E}}
	
	\newcommand{\ls}[1]{\textcolor{red}{\tt Lukas: #1 }}

	\title{ \bf On the convergence order  of the Euler scheme for scalar SDEs with H\"older-type diffusion coefficients}

	\author{Annalena Mickel \footnote{former member of Mathematical Institute and   DFG RTG 1953, 
			University of Mannheim,  B6, 26, D-68131 Mannheim, Germany}
		\and   Andreas Neuenkirch  \footnote{Mathematical Institute, 
			University of Mannheim,  B6, 26, D-68131 Mannheim, Germany, \texttt{aneuenki@mail.uni-mannheim.de}} 
	}

	\date{\today}

	\maketitle
	
	\begin{abstract}
	We study the Euler scheme for scalar non-autonomous stochastic differential equations, 
     whose diffusion coefficient is not globally Lipschitz but a fractional power of a globally Lipschitz function. We analyse the strong error and establish a criterion, which relates the convergence order of the Euler scheme to an inverse moment condition for the diffusion coefficient.
	Our result in particular applies to Cox-Ingersoll-Ross-, Chan-Karolyi-Longstaff-Sanders- or Wright-Fisher-type stochastic differential equations and thus provides a unifying framework.

		\medskip
		\noindent \textsf{{\bf Key words: } \em scalar SDEs, Euler scheme, non-standard assumptions, fractional power, $L^1$-error, inverse moment condition \\
		}
		\medskip
			\noindent{\small\bf 2010 Mathematics Subject Classification: 65C30;  60H35}
	\end{abstract}
	
	\section{Introduction and Main Result}
	Consider the scalar non-autonomous stochastic differential equation (SDE)
	\begin{equation}\label{SDE}
		dX_t = a(t,X_t)dt + c(t,X_t)dW_t,  \quad t\in[0,T], \qquad X_0=x_0 \in \mathbb{R},
	\end{equation} where $W=(W_t)_{t \in [0,T]}$ is a Brownian motion  and the drift and diffusion coefficients $a, c:  [0,T] \times \mathbb{R} \rightarrow \mathbb{R}$ satisfy suitable assumptions. Explicit solutions of such SDEs are rarely known and a simple and popular numerical method for this equation is the Euler scheme.
For an equidistant discretization with $N\in\mathbb{N}$ time steps, i.e.,
\begin{displaymath}
	t_k = k \frac{T}{N} , \quad k=0, \ldots,N,
\end{displaymath}
the equidistant Euler scheme for \eqref{SDE} is given by $x^{(N)}_k=x_k$ with
\begin{equation} \label{euler}
		x_{k+1} = x_k + a(t_k,x_k)(t_{k+1}-t_k)+c(t_k,x_k)(W_{t_{k+1}}-W_{t_{k}}), \qquad k=0, \ldots, N-1.
\end{equation}

	It is well known that the Euler scheme  \eqref{euler} for SDE \eqref{SDE} has strong convergence order $1/2$, i.e., one has
	$$ \limsup_{N \rightarrow \infty} \,\, N^{1/2} \, \sup_{k=0, \ldots, N} \mathbb{E}  \left| X_{t_k}- x_k^{(N)} \right|  < \infty
	, $$
	 if the drift and diffusion coefficient satisfy the following standard assumptions  (HLG) and (L).

	 \begin{assumption} \label{ass-1}
	 We will use the following  assumptions on  functions $f: [0,T] \times \mathbb{R} \rightarrow \mathbb{R}$, which are measurable.
	\begin{itemize} 
		\item[(HLG)] There exists a constant $K>0$ such that
		$$|f(t,x)-f(s,x)| \leq K (1+|x|) |t-s|^{1/2}$$
		for all $s,t \in [0,T]$ and all $x \in \mathbb{R}$.
		\item[(L)] There exists a constant $K>0$ such that
		$$|f(t,x)-f(t,y)| \leq K |x-y|$$
		for all $t \in [0,T]$ and all $x,y \in \mathbb{R}$.
		\item[(P)] We have 
		$$ f(t,x) \geq 0$$
			for all $t \in [0,T]$ and all $x \in \mathbb{R}$.
	\end{itemize}
	\end{assumption}

	In recent years, the analysis of numerical methods for SDEs under non-standard assumptions has become a very active research field, see Subsection \ref{subsec:overview} for an overview.
	In this article, we consider SDEs where the drift coefficient satisfies the standard assumptions, but the diffusion coefficient is a fractional power of a standard diffusion coefficient. More precisely, we will study the SDE
	\begin{equation}\label{SDE-g}
		dX_t = a(t,X_t)dt + \sigma(t,X_t)^{\ga}dW_t,  \quad t\in[0,T], \qquad X_0=x_0 \in \mathbb{R},
	\end{equation} where
$a,\sigma:[0,T] \times \mathbb{R} \rightarrow \mathbb{R}$ satisfy  (HLG) and (L), $\sigma$ satisfies additionally (P)
 and we have $\ga \in [1/2,1)$.
 Since $a$ and $\sigma^{\ga}$  are continuous and of linear growth, the existence of a weak solution to Equation \eqref{SDE-g} follows from the works of Skorokhod  (see \cite{Sko61,Sko62} and  also  \cite{krylov-book}). Since moreover 
 $$ |\sigma(t,x)^{\ga}-\sigma(t,y)^{\ga}| \leq K^{\ga} |x-y|^{\ga}$$
 for all $t \in [0,T]$ and all $x,y\in \mathbb{R}$
 and
 $$ \int_0^{\varepsilon} x^{-2\ga}dx= \infty $$
 for all $\varepsilon>0$, we have strong uniqueness and consequently existence of a unique strong solution to Equation  \eqref{SDE-g} by the results of Yamada and Watanabe (see, e.g., Proposition  2.13 and Corollary 3.23 in Chapter V of \cite{KS}).

 Typical examples for SDE \eqref{SDE-g} are the non-autonomous Cox-Ingersoll-Ross (CIR) process (see,  e.g., \cite{CIR})
 $$
 	dX_t=\kappa(t)\left(\lambda(t)-X_t\right)dt+\theta(t)\sqrt{X_t^+}dW_t, 
$$ the non-autonomous Chan-Karolyi-Longstaff-Sanders (CKLS) process (see, e.g., \cite{CKLS})
 $$
dX_t=\kappa(t)\left(\lambda(t)-X_t\right)dt+\theta(t) \left( X_t^+ \right)^{\ga}dW_t, 
$$ with $\ga \in (1/2,1)$ 
 and the non-autonomous   Wright-Fisher (WF)  process (see, e.g., \cite{Ethier_Kurtz})
  $$
 dX_t=\kappa(t)\left(\lambda(t)-X_t\right)dt+\theta(t)\sqrt{(X_t(1-X_t))^+}dW_t, 
 $$with $\kappa, \lambda, \theta: [0,T] \rightarrow [0,\infty)$. These SDEs have applications in various fields, e.g., in biology and finance, to mention a few.

\medskip

To formulate our main result, we will need the following assumption:
	 \begin{assumption}\label{ass-k}  Assume that $a, \sigma$ satisfy  (HLG) and (L) and that $\sigma$ additionally satisfies (P). 
	 	Moreover let $\ga\in [1/2,1)$ and assume that there exists  $\sk \in [0,1-\ga]$ such that
	\begin{equation} \label{ass-k-thm}  \int_0^T  \mathbb{E} \left[ \sigma(s,X_s)^{2(\ga+\sk-1)} \right]ds < \infty. \end{equation}
	\end{assumption}

	Our main result reads then as follows: 
	\begin{theorem} \label{thm:up-bound}  Let $\ga\in [1/2,1)$ be given and assume that Assumption \ref{ass-k} holds for a fixed $\sk \in [0,1-\ga]$. Moreover, let $X=(X_t)_{t \in [0,T]}$ be the solution  of \eqref{SDE-g} and let $(x_k)_{k=0, \ldots, N}$ be  given by the Euler scheme \eqref{euler} with diffusion coefficient $c=\sigma^{\ga}$.   Then we have
		\begin{equation*}
		\limsup_{N \rightarrow \infty} \,\, N^{\lambda} \, \sup_{k=0, \ldots, N} \mathbb{E} \left| X_{t_k}- x_k^{(N)} \right| =0	
	\end{equation*}		
		for all $\lambda < 1/2-\sk$.
	\end{theorem}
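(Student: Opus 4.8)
The plan is to control the strong error $e_k := \E|X_{t_k} - x_k|$ by a one-step-error analysis, where the crucial difficulty is the low regularity of $x \mapsto \sigma(t,x)^\gamma$ near the zero set of $\sigma$. The standard approach — Taylor expansion plus Gronwall — fails directly because $\sigma^\gamma$ is only $\gamma$-Hölder, not Lipschitz. To get around this, I would work with a continuous-time interpolation $\bar{x}_t$ of the Euler scheme (the process that is piecewise constant on the drift/diffusion evaluations, i.e. $d\bar{x}_t = a(t_{\lfloor t\rfloor_N}, \bar{x}_{t_{\lfloor t\rfloor_N}})\,dt + \sigma(t_{\lfloor t\rfloor_N}, \bar{x}_{t_{\lfloor t\rfloor_N}})^\gamma\,dW_t$), so that $\bar{x}_{t_k} = x_k$, and then estimate $\E|X_t - \bar{x}_t|$. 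Writing $Z_t = X_t - \bar{x}_t$, I would split the drift difference (handled by (L), giving a Lipschitz-in-$Z$ term plus a time-increment term controlled by (HLG) and known moment/Hölder bounds on $X$ and $\bar{x}$) from the diffusion difference, which is the heart of the matter.

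For the diffusion term I would use the Yamada–Watanabe-type smoothing technique: introduce a sequence of $C^2$ approximations $\phi_{\delta,\varepsilon}$ of $|\cdot|$ with $\phi_{\delta,\varepsilon}(z) \to |z|$ and $0 \le \phi_{\delta,\varepsilon}'' (z) \le \frac{2}{z \log(1/\delta)}\mathbf{1}_{[\delta, \delta^{1-?}]}(|z|)$ type bounds (the classical construction used to prove pathwise uniqueness for Hölder diffusions), apply Itô's formula to $\phi_{\delta,\varepsilon}(Z_t)$, and take expectations. The first-derivative terms reproduce the drift estimate; the key term is the Itô correction $\frac12 \E\int_0^t \phi_{\delta,\varepsilon}''(Z_s)\,(\sigma(s,X_s)^\gamma - \sigma(s_{\lfloor\rfloor}, \bar{x}_{s_{\lfloor\rfloor}})^\gamma)^2\,ds$. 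Here I would bound the squared diffusion difference by a constant times $|\sigma(s,X_s)^\gamma - \sigma(s,\bar{x}_s)^\gamma|^2 + (\text{time-increment})^2 + |\sigma(s,\bar{x}_s)^\gamma - \sigma(s_{\lfloor\rfloor}, \bar{x}_{s_{\lfloor\rfloor}})^\gamma|^2$, and for the first piece use the elementary inequality $|u^\gamma - v^\gamma|^2 \le |u^\gamma - v^\gamma| \cdot |u-v|^{2\gamma-1}\cdot C$ for $u,v\ge 0$ — more precisely, combine $|u^\gamma-v^\gamma|\le K^\gamma|u-v|^\gamma$ with the scaling so that $\phi''(Z_s)\cdot |\sigma(s,X_s)^\gamma - \sigma(s,\bar x_s)^\gamma|^2$ becomes integrable against the $1/z$ behaviour of $\phi''$. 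The parameter $\sk$ enters precisely here: to close the estimate one needs to insert a factor $\sigma(s,X_s)^{2(\gamma+\sk-1)}$ (or on the Euler side), whose integrability is exactly Assumption~\ref{ass-k}; this is what forces the rate loss $N^{-\sk}$ and produces the order $1/2 - \sk$.

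The key auxiliary facts I would establish first, before the main recursion, are: (i) uniform moment bounds $\sup_N \sup_{t\le T}\E|\bar{x}_t|^p < \infty$ and $\sup_{t\le T}\E|X_t|^p<\infty$ for all $p\ge 1$ (standard, from linear growth of $a$ and $\sigma^\gamma$ via Gronwall/BDG); (ii) Hölder-in-time estimates $\E|X_t - X_s|^p \le C|t-s|^{p/2}$ and likewise for $\bar{x}$, so that the time-discretization terms are $O(N^{-1/2})$; and (iii) an inverse-moment-type transfer lemma showing that the Euler-scheme quantity $\int_0^T \E[\sigma(s_{\lfloor\rfloor}, \bar{x}_{s_{\lfloor\rfloor}})^{2(\gamma+\sk-1)}]\,ds$ is also finite uniformly in $N$ (or, alternatively, arranging the Itô-correction bound so that only the true solution's inverse moment \eqref{ass-k-thm} is used, by comparing $\sigma(s,\bar x_s)$ to $\sigma(s,X_s)$ on the event where they differ by more than $|Z_s|$). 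The main obstacle I anticipate is precisely this last point: controlling the Itô correction term without assuming an inverse moment bound on the \emph{numerical} scheme, since such bounds for the Euler scheme near a degenerate boundary are known to be delicate. I expect the resolution is to choose the smoothing parameters $\delta = \delta(N)$ and $\varepsilon = \varepsilon(N)$ tending to zero at an $N$-dependent polynomial rate, balance the $\phi$-approximation error ($\sim \varepsilon$), the $\log(1/\delta)$ loss, and the discretization error, and use Young's inequality to absorb the Lipschitz-in-$Z$ part, ending with a Gronwall argument in $\sup_{k}e_k$ that yields $e_k \le C_\eta N^{-(1/2-\sk-\eta)}$ for every $\eta>0$, hence the claimed $\limsup$ being zero for every $\lambda < 1/2 - \sk$.
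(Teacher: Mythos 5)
Your overall architecture matches the paper's: a continuous-time Euler interpolation, a regularization of the absolute value applied to the error $Z_t=X_t-\bar x_t$ (you use Yamada--Watanabe smoothing functions; the paper instead uses the Tanaka--Meyer formula together with the expected local-time bound of Lemma~\ref{lemmadeAngelis}, which is a close cousin of the same idea), the drift handled by (L)/(HLG), moment and H\"older bounds for $X$ and $\bar x$, and a final Gronwall argument. Your worry (iii) about needing inverse moments of the \emph{numerical} scheme is moot: the estimate can always be arranged so that the weight sits on the exact solution only, and the parameter balancing you leave open does work out (one can take the upper cut-off of $\phi''_{\delta,\varepsilon}$ of order $(\Delta t)^{1/2}$ with the other parameter essentially fixed, which is the analogue of the paper's choice $\delta=(\Delta t)^{1/2}$ in \eqref{LT-0}).

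The genuine gap is at the single step that carries the whole theorem: the mechanism by which the factor $\sigma(s,X_s)^{2(\ga+\sk-1)}$ is ``inserted'' into the It\^o-correction term. The inequality you write, $|u^{\ga}-v^{\ga}|^2\le C\,|u^{\ga}-v^{\ga}|\,|u-v|^{2\ga-1}$, combined with $|u^{\ga}-v^{\ga}|\le K^{\ga}|u-v|^{\ga}$, contains no inverse power of $u$ whatsoever, so it cannot produce that weight; and if one uses only the $\ga$-H\"older bound, then $\phi''(Z_s)\,|\sigma(s,X_s)^{\ga}-\sigma(s,\bar x_s)^{\ga}|^2\lesssim |Z_s|^{2\ga-1}/\log(1/\delta)$ and optimizing the parameters reproduces exactly the rates of Gy\"ongy--R\'asonyi ($1/\log N$ for $\ga=1/2$, $\ga-1/2$ for $\ga>1/2$), not $1/2-\sk$. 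What is needed is the asymmetric interpolation inequality of Lemma~\ref{squareroot}, $|x^{\ga}-y^{\ga}|\le 2\,x^{-(1-\ga)\beta}|x-y|^{\beta+\ga(1-\beta)}$ for $x>0$, $y\ge 0$, applied with $x=\sigma(s,X_s)$, $y=\sigma(\eta(s),\bar x_{\eta(s)})$ and $\beta=1-\frac{1}{1-\ga}\left(\sk+\frac{\varepsilon}{2}\right)$: squaring gives the weight $\sigma(s,X_s)^{-2(1-\ga)\beta}$ times $|\,\cdot\,|^{2\beta(1-\ga)+2\ga}$, the regularized second derivative eats one power of $|Z_s|$, leaving $|Z_s|^{2\beta(1-\ga)+2\ga-1}$, and the integrability required of the weight (plus a little room for the H\"older steps on the time-discretization terms, as in \eqref{step_main}) is exactly Assumption~\ref{ass-k}; this is what yields the exponent $\ga-\frac12+\beta(1-\ga)=\frac12-\sk-\frac{\varepsilon}{2}$ of Theorem~\ref{thm:one}. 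In your proposal this step is asserted (``one needs to insert a factor'') rather than established, and the inequality you do state would not establish it; once Lemma~\ref{squareroot} (or an equivalent) is supplied, the rest of your outline goes through along essentially the paper's lines.
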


How to interpret condition  \eqref{ass-k-thm}? For this note first that the irregularity of the diffusion coefficient $\sigma^{\ga}$ arises from the set
$$ \mathcal{I}_{\sigma}= \{ (t,x) \in [0,T] \times \mathbb{R}: \, \sigma(t,x)=0\}.$$
For $(t_*,x_*) \in \mathcal{I}_{\sigma}$ and a neighbourhood $U_{*}$ of $(t_*,x_*)$  the map $ U_{*} \ni (t,x) \mapsto \sigma(t,x)^{\ga} \in [0,\infty)$ is only H\"older continuous of order $\ga$ and not Lipschitz continuous.
Moreover, since   $\ga+\sk \leq 1$ the exponent in  condition \eqref{ass-k-thm} is zero or negative. Thus, this condition is a weighted measure, how much the solution $X$ takes values in or close to the set $\mathcal{I}_{\sigma}$. In the case $\sk=0$, that is
\begin{equation*}   \int_0^T  \mathbb{E} \left[ \sigma(s,X_s)^{2\ga-2} \right] ds < \infty, \end{equation*} the process $X$ is sufficiently apart from the set $\mathcal{I}_{\sigma}$ and  we have directly convergence order $1/2-\varepsilon$ for all $\varepsilon >0$. If on the other hand
\begin{equation*}   \int_0^T  \mathbb{E} \left[ \sigma(s,X_s)^{2\ga-2} \right] ds = \infty, \end{equation*}
 the process $X$ is too close to the set $\mathcal{I}_{\sigma}$ and  we need a compensation factor $\sk$ and obtain convergence order $1/2-\sk -\varepsilon$.

Finally, let us comment on three limiting cases: 
\begin{itemize}
	\item[(i)] For $\ga =1$ condition  \eqref{ass-k-thm} would be  trivially satisfied for $\sk=0$ and we would (formally) obtain convergence  order  $1/2-\varepsilon$ in accordance with the classic result for the Euler scheme.
\item[(ii)] If $\sigma$ is uniformly elliptic,  i.e.,
$$ \inf_{(t,x) \in [0,T] \times \mathbb{R}} \sigma(t,x) > 0,$$
then condition \eqref{ass-k-thm} is trivially satisfied for $\sk=0$  and we obtain convergence  order  $1/2-\varepsilon$. This is again in accordance with the classic result for the Euler scheme, since $\sigma^{\ga}$ satisfies (HLG) and (L), if $\sigma$ is uniformly elliptic. \item[(iii)] If $\ga >1/2$ we can choose $\sk=1-\ga$. Then  condition \eqref{ass-k-thm} is satisfied, since $\ga+\sk-1=0$, and we obtain convergence order $\ga- 1/2-\varepsilon$. This recovers  in our setup the results of  \cite{Yan} and  \cite{GR} for autonomous SDEs.
\end{itemize}

How to verify condition \eqref{ass-k-thm}? For this we present two approaches in Subsection \ref{sec:examp-ito} and Subsection \ref{sec:examp-comp-time}.

\medskip

The remainder of this article is structured as follows:
In the next two subsections, we give an overview of previous results in the literature and outline our notation. Section \ref{sec:examples} applies our main result to several SDEs, in particular to the above mentioned CIR, CKLS and WF processes. Section \ref{proof-main-result} is devoted to the proof of our main result, while in the Appendix we collect some general auxiliary results.

	\subsection{Overview of previous results}\label{subsec:overview}
	
	 Since we have $\gamma\in\left[\frac{1}{2},1\right)$, the diffusion coefficient of \eqref{SDE-g} is not globally Lipschitz continuous and therefore standard convergence results for the Euler scheme do not apply.
	 	Following the pioneering articles \cite{GyKr,GY,HMS,Yan,HaKl}, the strong approximation of the Euler scheme for SDEs with non-standard coefficients has been extensively studied\footnote{To improve the readability, we will slightly abuse notation and write in this overview {\it convergence order $\lambda$} also for results which established {\it convergence order $\lambda -\varepsilon$ for all $\varepsilon>0$} for a given $\lambda>0$.}.
	 	
	 	\medskip
	 	
	 	In the case of an irregular drift coefficient, there has been a series of articles including \cite{ngo2016,ngo2017a,ngo2017b,LS3,Bao,NeSzSz,GLN,DG,NeuSz,DA,le2022taming} that has culminated so far in the works \cite{MGY1} and \cite{DGL}, which established under mild assumptions on the drift coefficient the classical convergence order $1/2$ for the Euler scheme. More precisely, for scalar autonomous SDEs with piecewise Lipschitz drift and a globally Lipschitz diffusion coefficient, which does not vanish in the discontinuity points of the drift, an $L^p$-error of order $1/2$ is obtained in \cite{MGY1}, while in the multi-dimensional case for a bounded drift and an elliptic, bounded $\mathcal{C}^2_b$-diffusion coefficient the same classical convergence order is obtained in  \cite{DGL}.
	 	
	 	\smallskip
	 	
	  Furthermore, the strong approximation by other schemes than the Euler scheme has been analyzed, e.g., in  \cite{LS1,LS2,Yaros3,MGY2,MGY3,gerencs_milstein}, and corresponding lower error bounds for the strong approximation of SDEs with (possibly) discontinuous drift coefficient have been studied in \cite{HeHeMG}, \cite{MGY3} and \cite{ellinger_sharp}.
	 
\medskip

	 The case of a general SDE with a non-Lipschitz diffusion coefficient has received less attention. While the works \cite{ngo2016,ngo2017a,ngo2017b} also allow weaker assumptions on the diffusion coefficient, the strongest results so far are due to \cite{Yan} and \cite{GR}.

	 \smallskip
	 
	 In \cite{Yan} the following H\"older- and Lipschitz-continuity assumptions on the coefficients are used: There exist $K>0, \beta_1,\beta_2 \in (0,1], \ga \in (1/2,1]$ such that
	 \begin{align*}
	 |a(t,x)-a(s,y)| \leq K  \left( |t-s|^{\beta_1} + |x-y| \right), \qquad 
	 	|c(t,x)-c(s,y)| \leq K  \left( |t-s|^{\beta_2} + |x-y|^{\ga} \right) 
	 \end{align*}
	for all $s,t \in [0,T],x,y \in \mathbb{R}$. Then, Theorem 4.1 in \cite{Yan} gives for the Euler scheme \eqref{euler} the $L^1$-convergence order  $\min \{\beta_1, \ga - 1/2, (2-1/\ga)\beta_2\}$ in the discretization points.

	 \smallskip

	 The article \cite{GR} analyses the Euler-type polygonal approximation 
	 	\begin{equation}\label{SDE_polygon}
	 	dY_t = a(t,Y_{\eta(t)})dt + c(t,Y_{\eta(t)})dW_t,  \quad t\in[0,T], \qquad Y_0=x_0 \in \mathbb{R},
	 \end{equation}
 where $\eta(t)=\max\{t_k\in\{t_0,t_1,...,t_N\}: t_k\le t\}$.
	 For autonomous SDEs this approximation coincides in the discretization points with the Euler scheme \eqref{euler}, while for general non-autonomous coefficients this scheme is typically not implementable.
	   The assumptions  in \cite{GR} are as follows:
	 We have
	 $ a=b+f$, where  for all $t \in [0,T]$ the function $f(t,\cdot)$ is monotonically decreasing, 
	 $$ \sup_{t \in [0,T]}  \left(|b(t,0)| + |c(t,0)| \right) < \infty$$ and there exists  $K>0, \beta_3 \in (0,1], \ga \in (1/2,1]$ such that
	  \begin{align*}
	 	|b(t,x)-b(s,y)| \leq K  |x-y| , \quad 
	 	|c(t,x)-c(s,y)| \leq K   |x-y|^{\ga},  \quad  |f(t,x)-f(s,y)|\leq K   |x-y|^{\beta_3}
	 \end{align*}
	 for all $t \in [0,T], x,y \in \mathbb{R}$. Under these assumptions,  Theorem 2.1 in \cite{GR} gives that the $L^1$-error of \eqref{SDE_polygon} is bounded by $C / \log(N)$ for a constant $C>0$ if $\ga=1/2$ and is of order $\min\{ \ga -1/2, \beta_3/2\}$ for $\ga \in (1/2,1]$.

	 	\smallskip

	On the other hand, the classical autonomous CIR process has a H\"older-1/2-diffusion coefficient and has received  a lot of attention in recent years, see, e.g., \cite{Mario,JentzenHefter,MiNe} and  the references therein.
	In particular, it turned out that Euler schemes  attain  for this SDE  $L^1$-convergence order $1/2$, if the Feller index $\nu:= \frac{2 \kappa \lambda}{\theta^2}$ satisfies $\nu>1$, see \cite{MiNe}. In this case our Assumption \ref{ass-k} is satisfied with $\sk=0$.
	Remarkably, this is a much better decay of the error than the logarithmic one provided in \cite{GR}. Similarly, for CKLS-type SDEs with $\ga \in (1/2,1)$ and a symmetrized Euler scheme the work \cite{BER} also establishes convergence order $1/2$.
	
	\medskip

	Thus, Theorem \ref{thm:up-bound} provides a unifying framework and also  a criterion, i.e., condition \eqref{ass-k}, which explains and unites these  different error bounds for the  Euler scheme.

	\subsection{Notation}
	As already mentioned, we will work with an equidistant discretization
	\begin{displaymath}
		t_k = k \Delta t, \quad k=0, \ldots, N,
	\end{displaymath}
	with $\Delta t= T/ N$ and $N \in \mathbb{N}$. Furthermore, we define $n(t):=\max\{k\in \{0,...,N\}:t_k\le t \}$ and $\eta(t):=t_{n(t)}$.  Constants whose values depend only on $x_0,T$, $a,\sigma $, $\ga$ will be denoted in the following by $C$, regardless of their value. Other dependencies will be denoted by subscripts, i.e., $C_{p}$ means that this constant depends additionally on the parameter $p$. Moreover, the value of all these constants can change from line to line. We will work on  a  filtered probability space $(\Omega, \cF, (\cF_t)_{t\in[0,T]}, {P})$ where the filtration satisfies the usual conditions, and (in-)equalities between random variables or random processes are understood ${P}$-a.s.~unless mentioned otherwise.

	Finally, for $p \in \mathbb{N}$ we denote by $\mathcal{C}^{p}(M;\mathbb{R})$ the  set of $p$-times continuously differentiable functions from the set $M$ to $\mathbb{R}$ and by  $\mathcal{C}^{p}_b(M;\mathbb{R})$ the set of continuously differentiable functions from  $M$ to $\mathbb{R}$, whose derivatives are bounded. Moreover, for $\gamma \in (0,1)$, we denote by $\mathcal{C}^{\gamma}(M;\mathbb{R})$ the  set of $\gamma$-H\"older continuous functions from $M$ to $\mathbb{R}$.

	\section{Examples}\label{sec:examples}
 Condition \eqref{ass-k-thm} from Assumption \ref{ass-k} gives structural insight in the convergence behaviour of the Euler scheme, but how to verify it? Here we present two approaches. The first one is based on the Feller test and It\=o's lemma.
The second one is based on a time change and the comparison lemma for SDEs.
 
 \subsection{Autonomous SDEs on domains}\label{sec:examp-ito}
 
Here we consider the autonomous equation
 \begin{equation}\label{auto_SDE}
 	dX_t = a(X_t)dt + \sigma(X_t)^{\ga}dW_t, \quad t \in [0,T], \qquad X_0 =x_0,
 \end{equation}
 and we assume that  $a\in C^{1}_b(\mathbb{R};\mathbb{R})$ and $\sigma \in C^{2}_b(\mathbb{R};[0,\infty))$. To apply It\=o's lemma to derive an explicit criterion for condition  \eqref{ass-k-thm} we need that $\sigma^{\ga}$ is differentiable on the support of $X$. To this end, we assume that there exists
 $I=(l,r)$ with $-\infty\le l\le r\le \infty$
 such that
 \begin{align*}
 	\forall x\in I:\quad	\sigma(x)>0,
 \end{align*}
 as well as
 $$ P(X_t \in I, \, t\in [0,T])=1.$$
 The latter condition can be checked, e.g., by the Feller test, see Theorem \ref{thm:feller-test} in the Appendix.
 
Under these assumptions it is well known that the Euler scheme has convergence order $1/2$ in law, that is
 \begin{equation*}
 	\lim_{N \rightarrow \infty} \,\, N^{1/2} \, \sup_{k=0, \ldots, N} \big| X_{t_k}- x_k^{(N)} \big|  \stackrel{\mathcal{L}}{=} \max_{t \in [0,T]} |U_t|,
 \end{equation*}		
 where the limiting process $U$ is the solution of the SDE
 \begin{align}\label{sde:limiting}
 	dU_t & =a'(X_t)U_t dt + \ga  \sigma^{\ga-1}(X_t)\sigma'(X_t)U_t dW_t + 	\frac{\ga }{\sqrt{2}}   \sigma(X_t)^{2\ga-1}\sigma'(X_t) d B_t,  \qquad U_0=0,
 \end{align}
 where $B=(B_t)_{t \in [0,T]}$ is another Brownian motion which is independent of $W$.
 See \cite{Nzahle} and \cite{Protter_etal}. Thus it is reasonable to expect that in this case the Euler scheme has also strong convergence order $1/2$. The limiting SDE \eqref{sde:limiting} also illustrates that the bottleneck is the error propagation and not the local error, since we have the exponent $\ga-1$ in the error propagation term  and the less restrictive exponent $2\ga-1$ in the local error term. 
 
 \begin{proposition} 
 	Let $\ga \in [1,2,1)$, $a\in C^{1}_b(\mathbb{R};\mathbb{R})$ and $\sigma \in C^{2}_b(\mathbb{R};[0,\infty))$.
 	Moreover let  $I=(l,r), -\infty\le l\le r\le \infty$, and assume that the unique solution of the SDE
 	\begin{equation*}
 		dX_t = a(X_t)dt + \sigma(X_t)^{\ga}dW_t,  \quad t\in[0,T], \qquad X_0=x_0 \in I,
 	\end{equation*}
 	satisfies 
 	$$ P(X_t \in I, \, t\in [0,T])=1$$
 	as well as
 	\begin{align*}
 		\forall x\in I:\quad	\sigma(x)>0.
 	\end{align*}
 	Finally, assume that
 	\begin{align}\label{ass:ito}
 		\inf_{x \in I} \left[   \frac{\sigma'(x)a(x)}{\sigma(x)} +  \frac{1}{2} \frac{\sigma''(x) \sigma(x)^{2\ga}}{\sigma(x)}  +\left(\ga- \frac{3}{2} \right)  \frac{(\sigma'(x))^2 \sigma(x)^{2\ga}}{\sigma(x)^2}   \right] > - \infty.
 	\end{align}

 	Then we have
 	\begin{equation*}
 		\limsup_{N \rightarrow \infty} \,\, N^{\lambda} \, \sup_{k=0, \ldots, N} \mathbb{E} \left| X_{t_k}- x_k^{(N)} \right| =0	
 	\end{equation*}		
 	for all $\lambda < 1/2$.
 	
 \end{proposition}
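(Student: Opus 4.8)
First I would reduce the statement to an inverse moment estimate and then invoke Theorem~\ref{thm:up-bound}. Since $\ga\in[1/2,1)$ we have $0\in[0,1-\ga]$, and the structural hypotheses of Assumption~\ref{ass-k} on $a$ and $\sigma$ are automatic here (as $a\in C^1_b$, $\sigma\in C^2_b(\R;[0,\infty))$ are autonomous and globally Lipschitz, and $\sigma\ge 0$). Hence it suffices to prove that Assumption~\ref{ass-k} holds with $\sk=0$, i.e.
$$\int_0^T\mathbb{E}\big[\sigma(X_s)^{2\ga-2}\big]\,ds<\infty ,$$
because Theorem~\ref{thm:up-bound} then gives $\limsup_{N\to\infty}N^\lambda\sup_{k=0,\ldots,N}\mathbb{E}|X_{t_k}-x_k^{(N)}|=0$ for every $\lambda<1/2$.

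The key step is to apply Itô's formula to $t\mapsto\log\sigma(X_t)$. This is admissible because $\sigma$ is $C^2$ and, by hypothesis, $X_t\in I$ for all $t\in[0,T]$ almost surely, so $\sigma(X_t)>0$ along the whole path. Writing $\mathcal{L}$ for the generator of \eqref{auto_SDE}, a direct computation gives
$$\mathcal{L}(\log\sigma)(x)=\frac{\sigma'(x)a(x)}{\sigma(x)}+\frac12\frac{\sigma''(x)\sigma(x)^{2\ga}}{\sigma(x)}-\frac12\frac{(\sigma'(x))^2\sigma(x)^{2\ga}}{\sigma(x)^2},$$
and comparing with the bracketed expression in \eqref{ass:ito} (which I will call $\Psi(x)$) yields the algebraic identity
$$\mathcal{L}(\log\sigma)(x)=\Psi(x)+(1-\ga)\,(\sigma'(x))^2\,\sigma(x)^{2\ga-2},\qquad x\in I.$$
By \eqref{ass:ito} there is a finite constant $M$ with $\Psi\ge -M$ on $I$, so, since $1-\ga>0$, the troublesome term $(\sigma')^2\sigma^{2\ga-2}$ now sits inside $\mathcal{L}(\log\sigma)$ with a favourable sign.

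Next I would localise. Set $\tau_n:=\inf\{t\in[0,T]:\sigma(X_t)\le 1/n\}\wedge T$; since the image $\{X_t:t\in[0,T]\}$ of almost every trajectory is a compact subset of the open set $I$, one has $\inf_{t\in[0,T]}\sigma(X_t)>0$ a.s., hence $\tau_n\uparrow T$ a.s. On $[0,\tau_n]$ both the $dW$-integrand $s\mapsto\sigma'(X_s)\sigma(X_s)^{\ga-1}$ (bounded by $\|\sigma'\|_\infty n^{1-\ga}$) and $\mathcal{L}(\log\sigma)(X_\cdot)$ are bounded, so taking expectations in Itô's formula and using the identity above gives
$$(1-\ga)\,\mathbb{E}\Big[\int_0^{T\wedge\tau_n}(\sigma'(X_s))^2\sigma(X_s)^{2\ga-2}\,ds\Big]=\mathbb{E}\big[\log\sigma(X_{T\wedge\tau_n})\big]-\log\sigma(x_0)-\mathbb{E}\Big[\int_0^{T\wedge\tau_n}\Psi(X_s)\,ds\Big].$$
Since $\sigma$ is bounded and $\Psi\ge -M$, the right-hand side is at most $\log\big(\sup_{x\in\R}\sigma(x)\big)-\log\sigma(x_0)+MT$, uniformly in $n$; letting $n\to\infty$ by monotone convergence and applying Tonelli's theorem gives $\int_0^T\mathbb{E}[(\sigma'(X_s))^2\sigma(X_s)^{2\ga-2}]\,ds<\infty$. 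To pass from this to $\int_0^T\mathbb{E}[\sigma(X_s)^{2\ga-2}]\,ds<\infty$ I would split: for $\delta>0$, on $\{\sigma(X_s)\ge\delta\}$ one simply has $\sigma(X_s)^{2\ga-2}\le\delta^{2\ga-2}$, while on $\{\sigma(X_s)<\delta\}$ (with $\delta$ small, so $X_s$ lies near $\partial I$) one uses that at a boundary zero $c$ of $\sigma$ with $\sigma'(c)\neq 0$ one has $|\sigma'|\ge|\sigma'(c)|/2$ in a neighbourhood, hence there $\sigma^{2\ga-2}\le 4|\sigma'(c)|^{-2}(\sigma')^2\sigma^{2\ga-2}$, which reduces the bad set to the already-controlled weighted integral. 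Combining the two regimes verifies the inverse moment condition, and Theorem~\ref{thm:up-bound} with $\sk=0$ completes the proof.

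I expect the main obstacle to be twofold. First, the localisation is essential rather than cosmetic: the martingale term has integrand $\sigma(X_s)^{\ga-1}\sigma'(X_s)$, whose square is exactly the quantity being estimated, so a naive Itô-isometry argument is circular; the stopping time $\tau_n$ together with monotone convergence is what resolves this, and one must check that $\mathbb{E}[\log\sigma(X_{T\wedge\tau_n})]$ is well defined and bounded above uniformly in $n$ (which it is, since $\sigma$ is bounded). Second, and more delicate, is the removal of the weight $(\sigma')^2$ near $\partial I$ when $\sigma$ has a non-simple boundary zero, where $\sigma^{2\ga-2}\le C\big(1+(\sigma')^2\sigma^{2\ga-2}\big)$ fails pointwise; there one has to exploit that the solution, being confined to $I$, spends correspondingly little time near such a point (a quantitative non-attainability/Feller-type estimate), although for all the examples of interest in this paper (CIR, CKLS, Wright--Fisher) the relevant boundary zeros of $\sigma$ are simple and the elementary estimate above is enough.
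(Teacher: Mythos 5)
Your It\^o-plus-localisation argument for $\log\sigma(X_t)$ is computed correctly, but it does not prove the proposition: the final step, removing the weight $(\sigma'(X_s))^2$, is a genuine gap, and you have in effect conceded it yourself. What your argument delivers is $\int_0^T\mathbb{E}\big[(\sigma'(X_s))^2\sigma(X_s)^{2(\ga-1)}\big]ds<\infty$, whereas Assumption \ref{ass-k} with $\sk=0$ requires $\int_0^T\mathbb{E}\big[\sigma(X_s)^{2(\ga-1)}\big]ds<\infty$, and passing from the weighted to the unweighted bound needs $|\sigma'|$ to be bounded away from zero wherever $\sigma$ is small. That is an extra hypothesis not implied by \eqref{ass:ito}: take, for instance, $I$ with left endpoint $0$ and $\sigma(x)=x^2$ near $0$ with $a(0)>0$ (and $a,\sigma$ extended suitably). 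Near $0$ the bracket in \eqref{ass:ito} equals $2a(x)/x+(4\ga-5)x^{4\ga-2}$, which is bounded below since $4\ga-2\ge0$, so \eqref{ass:ito} can hold; yet $(\sigma'(x))^2\sigma(x)^{2\ga-2}=4x^{4\ga-2}$ is bounded near $0$, so your weighted estimate carries no information about $\sigma(x)^{2\ga-2}=x^{4\ga-4}$, which is exactly the quantity that must be integrable. Your appeal to a ``quantitative non-attainability/Feller-type estimate'' for non-simple boundary zeros is precisely the missing proof; as written, the argument only covers simple zeros, a restriction the proposition does not make. (A smaller, fixable point: you bound $\mathbb{E}[\log\sigma(X_{T\wedge\tau_n})]$ by $\log\|\sigma\|_\infty$, but the paper's $\mathcal{C}^2_b$ only guarantees bounded derivatives; linear growth of $\sigma$ together with Lemma \ref{bounded} repairs this.)

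The paper avoids the weight altogether by a different choice of test function: apply It\^o's formula directly to $\sigma(X_t)^{2(\ga-1)}$, stopped at $\tau_k=\inf\{t\in[0,T]:\sigma(X_t)^{2(\ga-1)}\ge k\}$. The resulting drift is $h_{a,\sigma,\ga}(x)=(2\ga-2)\,\sigma(x)^{2(\ga-1)}\,\Psi(x)$, with $\Psi$ your bracketed function, so \eqref{ass:ito} together with $2\ga-2\le0$ gives $h_{a,\sigma,\ga}(x)\le C\,\sigma(x)^{2(\ga-1)}$ on $I$; Gronwall's lemma then yields $\sup_{t\in[0,T]}\mathbb{E}\big[\sigma(X_{t\wedge\tau_k})^{2(\ga-1)}\big]\le C$ uniformly in $k$, and Fatou's lemma gives \eqref{ass-k-thm} with $\sk=0$, after which Theorem \ref{thm:up-bound} concludes exactly as you intended. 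In other words, the closed (Gronwall-compatible) quantity is $\sigma^{2(\ga-1)}$ itself, not $\log\sigma$; working with $\log\sigma$ forces you to trade the target quantity for a $(\sigma')^2$-weighted surrogate and then recover it by structural assumptions on the zeros of $\sigma$ that the statement does not grant. To rescue your route you would have to either add a simple-zero hypothesis or actually prove the occupation-time estimate you allude to; as it stands, the proposal is incomplete.
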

 \begin{proof}
 	Let $k \in \mathbb{N}$ such that $k > \sigma(x_0)^{2(\ga-1)}$ and set 
 	$$ \tau_k= \inf \{t \in [0,T]: \, \sigma(X_t)^{2(\ga-1)} \geq k \}$$ with the convention $\inf \emptyset =T$. An application of It\=o's lemma and the local martingale property of the It\=o integral give
 \begin{align*}
 	\mathbb{E} \left( \sigma(X_{t\wedge \tau_k})^{2(\ga-1)} \right) &= \sigma(x_0)^{2(\ga-1)}
 	+ \mathbb{E} \left( \int_0^{t\wedge \tau_k} h_{a,\sigma,\ga}(X_s) ds \right)
 \end{align*}	
 	with
 	\begin{align*}
h_{a,\sigma,\ga}(x)& =  (2\ga-2) \sigma(x)^{2\ga-2}& \\ & \qquad \times \left( \sigma'(x)\sigma^{-1}(x)a(x)+\left(\ga- \frac{3}{2} \right)\sigma(x)^{2\ga-2} (\sigma'(x))^2+ \frac{1}{2}\sigma(x)^{2\ga-1} \sigma''(x) \right).
 	\end{align*}
 Assumption \eqref{ass:ito} now yields the existence of constant $C>0$, which is in particular independent of $k$, such that
 $$ h_{a,\sigma,\ga}(x) \leq C \sigma(x)^{2(\ga-1)} $$ for all $x \in I$. 
 Gronwall's lemma gives then the existence   of another constant $C>0$, which is in particular independent of $k$, such that
$$ \sup_{t \in [0,T]}	\mathbb{E} \left( \sigma(X_{t\wedge \tau_k})^{2(\ga-1)} \right) \leq C. $$
 Assumption \eqref{ass-k-thm} with $\sk=0$ now follows by taking $k\rightarrow \infty$.
 \end{proof}

 \subsection{Prototype SDEs}\label{sec:examp-comp-time}
 In this subsection we consider the SDEs
 \begin{equation}	\label{SDE_proto}
 	dX_t = a(t,X_t)dt + c_i(t,X_t)dW_t,  \qquad t\in[0,T], 
\end{equation}
for $i=1,2,3$, where
$$
 	c_i(t,x)=\theta(t)h_{i}(x)$$
with $\theta \in C^{1/2}([0,T];(0,\infty))$ and
$$ h_1(x)=\sqrt{x^+}, \qquad h_2(x) =\sqrt{(x(1-x))^+}, \qquad h_3(x)=(x^+)^{\ga},$$
where $\ga \in (1/2,1)$.

Theses SDEs are extensions of the classical CIR, CKLS and WF processes. For verifying Assumption \ref{ass-k}, the following time change procedure will be helpful. So, let 
$$\Theta:[0,T] \rightarrow \mathbb{R}, \qquad \Theta(t)=\int_0^t \theta^2(s) ds. $$ Then $\Theta$ is strictly increasing and has an inverse $A:=\Theta^{-1}$. Now consider the time-changed process
$$ \widetilde{X}_t=X_{A(t)}, \qquad t \in [0,A^{-1}(T)].$$
Since $X$ satisfies 
$$X_t=x_0 + \int_0^t a(s,X_s)ds + \int_0^t \theta(s) h_i(X_s) dW_s, \qquad t \in [0,T],$$
we have
\begin{align*} \widetilde{X}_t=x_0 + \int_0^{A(t)} a(s,X_s)ds + \int_0^{A(t)} \theta(s) h_i(X_s) dW_s, \qquad t \in [0,T]. \end{align*}
Now set 
$$ \widetilde{W}_t = \int_0^{A(t)} \theta(s) dW_s, \qquad t \in [0,A^{-1}(T)].$$
Then $\widetilde{W}$ is a Brownian motion on $[0,A^{-1}(T)]$ and the rules for the time change for Riemann and It\=o integrals, see Proposition \ref{prop:time-change} in the Appendix, yield that
\begin{align*} \widetilde{X}_t & = x_0 +\int_0^{t} \frac{a(A(s),\widetilde{X}_{s})}{\theta(A(s))^2}ds + \int_0^{t} h_i(\widetilde{X}_{s}) d\widetilde{W}_s, \qquad t \in [0,A^{-1}(T)].  \end{align*}
This enables us to exploit the classical bounds for the inverse moments of the CIR, CKLS and WF processes. See Subsection \ref{subsec:classical_bounds} in the Appendix.

\smallskip

For $c_1$ and $c_2$ we obtain the following result:

\begin{proposition}\label{prop:cir-wf}
 (i) Let $x_0 >0$ and $\theta \in \mathcal{C}^{1/2}([0,T];(0,\infty))$. Moreover, assume that the drift coefficient
$a$ satisfies	(HLG), (L)
	 and assume furthermore that $$ \mu_0 := \min_{t \in [0,T]} \frac{a(t,0)}{\theta(t)^2} > 0.$$
	 Then the unique strong solution of 
	 $$   	dX_t = a(t,X_t)dt + \theta(t) \sqrt{X_t^+}dW_t,  \quad t\in[0,T], \qquad X_0=x_0,$$
	satisfies $ P( X_t \geq 0, t \in [0,T])=1$ and we have
			\begin{equation*}
		\limsup_{N \rightarrow \infty} \,\, N^{\lambda} \, \sup_{k=0, \ldots, N} \mathbb{E} \left| X_{t_k}- x_k^{(N)} \right| =0	
	\end{equation*}		
	for all $$\lambda < \min \{1/2, \mu_0\}.$$
	(ii)  Let $x_0 \in (0,1)$ and $\theta \in \mathcal{C}^{1/2}([0,T];(0,\infty))$. Moreover, assume that the drift coefficient
	$a$ satisfies	(HLG), (L)
	and assume furthermore that $$ \mu_0 := \min_{t \in [0,T]} \frac{a(t,0)}{\theta(t)^2} > 0 \qquad \textrm{and} \qquad 
	\mu_1 := - \max_{t \in [0,T]} \frac{a(t,1)}{\theta(t)^2}  > 0.$$
	Then the unique strong solution of 
	$$   	dX_t = a(t,X_t)dt + \theta(t) \sqrt{ (X_t(1-X_t))^+} dW_t,  \quad t\in[0,T], \qquad X_0=x_0,$$
	satisfies $ P( X_t \in [0,1], t \in [0,T])=1$ and we have
	\begin{equation*}
		\limsup_{N \rightarrow \infty} \,\, N^{\lambda} \, \sup_{k=0, \ldots, N} \mathbb{E} \left| X_{t_k}- x_k^{(N)} \right| =0	
	\end{equation*}		
	for all $$\lambda < \min \{1/2, \mu_0,\mu_1\}.$$
\end{proposition}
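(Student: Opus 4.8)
The plan is to derive both statements from Theorem~\ref{thm:up-bound} by verifying that Assumption~\ref{ass-k} holds for a suitable $\sk\in[0,1/2]$; throughout, $\ga=1/2$ and the diffusion coefficient of \eqref{SDE_proto} is $c_i=\sigma_i^{1/2}$ with $\sigma_1(t,x)=\theta(t)^2x^+$ and $\sigma_2(t,x)=\theta(t)^2(x(1-x))^+$. Since $\theta$ is continuous, strictly positive, and hence bounded and bounded away from $0$ on $[0,T]$, and since $x\mapsto x^+$ and $x\mapsto(x(1-x))^+$ are globally Lipschitz with the latter additionally bounded, one checks directly that $\sigma_1$ and $\sigma_2$ satisfy (HLG), (L) and (P); so only the integrability condition \eqref{ass-k-thm} remains. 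For this I would use the time change already set up at the beginning of Subsection~\ref{sec:examp-comp-time}: with $\Theta(t)=\int_0^t\theta^2(s)\,ds$, $A=\Theta^{-1}$ and $\widetilde X_t=X_{A(t)}$, Proposition~\ref{prop:time-change} shows that $\widetilde X$ solves $d\widetilde X_t=\tilde a(t,\widetilde X_t)\,dt+h_i(\widetilde X_t)\,d\widetilde W_t$ on $[0,\Theta(T)]$ with $\tilde a(t,x)=a(A(t),x)/\theta(A(t))^2$ and a Brownian motion $\widetilde W$; here $\tilde a(t,\cdot)$ is Lipschitz uniformly in $t$ with some constant $\widetilde K$, $\tilde a(t,0)\ge\mu_0$ for all $t$, and in case (ii) additionally $\tilde a(t,1)\le-\mu_1$. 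Since $X_s=\widetilde X_{\Theta(s)}$, $\Theta([0,T])=[0,\Theta(T)]$ and $\theta$ is bounded, condition \eqref{ass-k-thm} will reduce to the uniform inverse moment bounds
\[
\sup_{t\le\Theta(T)}\E\big[\widetilde X_t^{-p}\big]<\infty \quad \text{(case (i))}, \qquad \sup_{t\le\Theta(T)}\E\big[(\widetilde X_t(1-\widetilde X_t))^{-p}\big]<\infty \quad \text{(case (ii))},
\]
for the relevant exponents $p=1-2\sk\in[0,1)$.

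I would obtain these bounds from the one-dimensional comparison theorem for SDEs together with the classical inverse moment estimates for CIR- and Wright--Fisher-type processes recalled in Subsection~\ref{subsec:classical_bounds}. In case (i), let $Z$ solve $dZ_t=(\mu_0-\widetilde K Z_t)\,dt+\sqrt{Z_t^+}\,d\widetilde W_t$, $Z_0=x_0$: its diffusion coefficient coincides with that of $\widetilde X$, its drift is Lipschitz with $\mu_0-\widetilde K x\le\tilde a(t,x)$ for $x\ge0$, and $Z$ stays nonnegative, so the comparison theorem gives $0\le Z_t\le\widetilde X_t$ for all $t$; in particular $P(X_t\ge0,\ t\in[0,T])=1$. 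Since the corresponding Feller index equals $2\mu_0$, the classical estimate yields $\sup_{t\le\Theta(T)}\E[Z_t^{-p}]<\infty$, hence $\sup_{t\le\Theta(T)}\E[\widetilde X_t^{-p}]<\infty$, for every $p<2\mu_0$. In case (ii) I would sandwich $\widetilde X$ between two Wright--Fisher (Jacobi) processes with the common diffusion coefficient $\sqrt{(x(1-x))^+}$: a process $Z$ with drift $\mu_0(1-x)-\beta_0x$, where $\beta_0$ is chosen large enough that this drift lies below $\tilde a(t,\cdot)$ on $[0,1]$, and a process $Z'$ with drift $\alpha_1(1-x)-\mu_1x$, where $\alpha_1$ is chosen large enough that its drift lies above $\tilde a(t,\cdot)$ on $[0,1]$; both $Z$ and $Z'$ remain in $[0,1]$, so $0\le Z_t\le\widetilde X_t\le Z'_t\le1$ and therefore $P(X_t\in[0,1],\ t\in[0,T])=1$. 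The classical inverse moment bounds then give $\sup_{t\le\Theta(T)}\E[\widetilde X_t^{-p}]<\infty$ for $p<2\mu_0$ and $\sup_{t\le\Theta(T)}\E[(1-\widetilde X_t)^{-p}]<\infty$ for $p<2\mu_1$ (the latter also follows by applying the case-(i) argument to $1-\widetilde X_t$, which solves an SDE of the same type with boundary drift at $0$ bounded below by $\mu_1$), and the elementary inequality $(x(1-x))^{-p}\le 2^p(x^{-p}+(1-x)^{-p})$ for $x\in(0,1)$ then controls $\E[(\widetilde X_t(1-\widetilde X_t))^{-p}]$ uniformly for $p<\min\{2\mu_0,2\mu_1\}$.

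Finally, I would match exponents. Given $\lambda<\min\{1/2,\mu_0\}$ in case (i), resp.\ $\lambda<\min\{1/2,\mu_0,\mu_1\}$ in case (ii), one has $\max\{0,1/2-\mu_0\}<1/2-\lambda$, resp.\ $\max\{0,1/2-\mu_0,1/2-\mu_1\}<1/2-\lambda$, so one can choose $\sk\in[0,1/2]$ strictly between these two numbers. Then $p:=1-2\sk$ satisfies $0\le p<\min\{1,2\mu_0\}$, resp.\ $p<\min\{1,2\mu_0,2\mu_1\}$, so by the bounds above and the boundedness of $\theta$ the integral $\int_0^T\E[\sigma_i(s,X_s)^{2(\ga+\sk-1)}]\,ds$, which equals $\int_0^T\theta(s)^{4\sk-2}\,\E[X_s^{2\sk-1}]\,ds$ in case (i) and $\int_0^T\theta(s)^{4\sk-2}\,\E[(X_s(1-X_s))^{2\sk-1}]\,ds$ in case (ii), is finite. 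Hence Assumption~\ref{ass-k} holds with this $\sk\in[0,1-\ga]$, and Theorem~\ref{thm:up-bound} gives the assertion with exponent $1/2-\sk>\lambda$.

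The step I expect to be the main obstacle is the second one: one must produce comparison processes sharing the exact H\"older-$1/2$ diffusion coefficient of $\widetilde X$ \emph{and} having boundary drift precisely $\mu_0$ (and $\mu_1$), so that the sharp classical inverse moment exponents $2\mu_0$ (and $2\mu_1$) are inherited by $X$; additional care is needed because the comparison theorem demands the drift ordering globally together with a Lipschitz majorant/minorant, which forces one to work with the nonnegative (resp.\ $[0,1]$-valued) localized versions of the processes. Everything else amounts to bookkeeping with the time change and the exponent matching.
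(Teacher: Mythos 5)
Your proposal is correct and follows essentially the same route as the paper's proof: verification of (HLG)/(L)/(P) for $\sigma_i$, the deterministic time change of Subsection \ref{sec:examp-comp-time}, a comparison via Proposition \ref{prop:comparison} with a linear-drift square-root (resp.\ Jacobi) process whose boundary drift is exactly $\mu_0$ (resp.\ $\mu_1$), the inverse-moment bounds of Lemma \ref{lem:classcial}, the splitting $(x(1-x))^{2\sk-1}\le 2^{1-2\sk}\left(x^{2\sk-1}+(1-x)^{2\sk-1}\right)$, and the choice of $\sk$ with $\sk+\mu_i>1/2$ fed into Theorem \ref{thm:up-bound}. The only cosmetic deviation is that you handle the boundary at $1$ by an upper Jacobi comparison, whereas the paper reflects through $1-\widetilde{X}$ and applies a second lower comparison --- the same argument in different clothing, as you note yourself.
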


\begin{proof} (1) Let us first check the assumptions (HLG) and (L) on $a$ and $\sigma$ for both equations. For the drift coefficient there is nothing to verify.
	
	 For the diffusion coefficient in (i) note that $\sigma$ is given by
	$\sigma(t,x)=\theta(t)^{2}x^+$. So clearly, we have
	$$ |\sigma(t,x)-\sigma(t,y)| \leq  \left( \max_{\tau \in [0,T]} |\theta(\tau)|^{2} \right) |x^+-y^+| \leq \| \theta\|_{\infty}^2  |x-y|  $$
with
$$ \|\theta\|_{\infty} := \max_{ t \in [0,T] } |\theta(t)| < \infty$$
	and
		\begin{align*}
		|\sigma(t,x)-\sigma(s,x)| & \leq  2 \left( \max_{\tau \in [0,T]}  \theta(\tau) \right) |\theta(t)-\theta(s)| |x^+|  \\ & \leq 2  \| \theta \|_{\infty} \|\theta\|_{1/2} |t-s|^{1/2}|x| 
		\end{align*}
with
$$ \|\theta\|_{1/2} := \sup_{0 \leq s < t \leq T } \frac{|\theta(t)-\theta(s)|}{|t-s|^{1/2}}<\infty.$$ 

For (ii) note that $\sigma(t,x)= \theta(t)^2(x(1-x))^+$. In this case we have
	$$ |\sigma(t,x)-\sigma(t,y)| \leq  \max_{\tau \in [0,T]} |\theta(\tau)|^{2} |(x(1-x))^+-(y(1-y))^+| \leq \| \theta\|_{\infty}^2  |x-y|  $$
and
\begin{align*}
	|\sigma(t,x)-\sigma(s,x)| & \leq  2  \| \theta \|_{\infty} \|\theta\|_{1/2} |t-s|^{1/2}.
\end{align*}

 (2) Thus it remains to relate the inverse moment condition to  the quantities $\mu_0$ and $\mu_1$. We start with assertion (i). First note that
 $$ 0< \min_{t \in [0,T]} \theta(t) \leq  \max_{t \in [0,T]} \theta(t)  < \infty.$$
  Using this and the  time change described   above
 we have that
 $$ \sup_{t \in [0,T]} \mathbb{E} \left[  \left(\theta(t)\sqrt{ X_t^+} \right )^{p} \right] < \infty \quad \Longleftrightarrow  \sup_{t \in [0,A^{-1}(T)]}  \mathbb{E} \left[\widetilde{X}_t^{p/2} \right] < \infty $$
 for $p \in \mathbb{R}$, where 
	  \begin{align*} \widetilde{X}_t&= x_0 +\int_0^{t} \frac{a(A(s),\widetilde{X}_{s})}{\theta(A(s))^2}ds + \int_0^{t} \sqrt{\widetilde{X}_{s}^+} d\widetilde{W}_s, \qquad t \in [0,A^{-1}(T)].  \end{align*}
	  Recall that
	   $$ \mu_0 := \min_{t \in [0,T]} \frac{a(t,0)}{\theta(t)^2} > 0$$
	   and note that
	   $$ \min_{t \in [0,T]} \frac{a(t,0)}{\theta(t)^2} =  \min_{t \in [0,A^{-1}(T)]} \frac{a(A(t),0)}{\theta(A(t))^2}. $$
Define
	   \begin{align} \label{eq:cp} a_{cp}(s,x)=\mu_0- K \left( \max_{\tau \in [0,A^{-1}(T)]} \frac{1}{\theta(A(\tau))^2} \right) x, \end{align}	   
	   where $K$ is the Lipschitz constant of $a$ with respect to $x$ from assumption (L). Since
	   $$a(A(s),x) = a(A(s),0) + a(A(s),x)-a(A(s),0)$$
	   we now have that
	   $$ \frac{a(A(s),x)}{\theta^2(A(s))} \geq a_{cp}(s,x), \qquad s \in [0,A^{-1}(T)], \,\ x \in \mathbb{R}.$$
	   Therefore, the comparison result for SDEs, see Proposition \ref{prop:comparison} in the Appendix, yields that
	   $$ P(\widetilde{X}_t \geq Z_t, \, t \in [0,A^{-1}(T)])=1, $$
	   where $Z$ is given by the SDE
	   $$  Z_t = x_0 +\int_0^{t} a_{cp}(s,Z_s) ds + \int_0^{t} \sqrt{Z_{s}^+} d\widetilde{W}_s, \qquad t \in [0,A^{-1}(T)].$$
	  We have
	  $$ P(  Z_t \geq 0, \, t \in [0,A^{-1}(T)])=1 $$
	  and 
	  $$  \sup_{t \in [0,A^{-1}(T)]} \mathbb{E} \left[ Z_t^{p} \right] < \infty \qquad \textrm{for}  \qquad p > - 2 \mu_0, $$
	  see Lemma \ref{lem:classcial}(i).
	  Since
	  $$  \frac{1}{T} \int_0^T  \mathbb{E}   \big[ X_t^{2\sk-1} \big] dt  \leq \sup_{t \in [0,A^{-1}(T)]}  \mathbb{E} \big[ \widetilde{X}_t^{2\sk-1} \big]  \leq  \sup_{t \in [0,A^{-1}(T)]}   \mathbb{E}  \big[ Z_t^{2\sk-1} \big], $$
	  condition \eqref{ass-k} with $\ga=1/2$, that is
	  $$  \mathbb{E} \int_0^T \left( \theta(t)^2 X_t \right)^{2\sk-1} dt < \infty,  $$ 
holds if
$$ \sk+\mu_0 > 1/2.$$
Hence, Theorem  \ref{thm:up-bound}  yields convergence order $\lambda$ for all $\lambda <1/2$, if $\m_0 >1/2$. Otherwise, i.e., if $\mu_0 \leq 1/2$, Theorem \ref{thm:up-bound}  yields convergence order $\lambda$ for all $\lambda < \mu_0$.

	   (3) For assertion (ii) note that
	  $$ \sup_{t \in [0,T]} \mathbb{E} \left[  \left(\theta(t)\sqrt{ X_t(1-X_t)^+} \right )^{p} \right]< \infty \quad \Longleftrightarrow  \sup_{t \in [0,A^{-1}(T)]} \mathbb{E} \left[\left(\widetilde{X}_t(1-\widetilde{X}_t) \right)^{p/2} \right] < \infty $$
	  for $p \in \mathbb{R}$ where 
	  \begin{align*} \widetilde{X}_t&= x_0 +\int_0^{t} \frac{a(A(s),\widetilde{X}_{s})}{\theta(A(s))^2}ds + \int_0^{t} \sqrt{(\widetilde{X}_{s}(1-\widetilde{X}_s))^+} d\widetilde{W}_s, \qquad t \in [0,A^{-1}(T)].  \end{align*}
(a) Proceeding analogously as for case (i) we obtain that  
	  $$ P( \widetilde{X}_t \geq Z_t \geq 0, \, t \in [0,A^{-1}(T)])=1 $$
	where
 $Z$ is given by the SDE
	$$  Z_t = x_0 +\int_0^{t} a_{cp}(s,Z_s) ds + \int_0^{t} \sqrt{(Z_{s}(1-Z_s))^+} d\widetilde{W}_s, \qquad t \in [0,A^{-1}(T)],$$
	 with $a_{cp}$ as in \eqref{eq:cp} and 
	  $$  \sup_{t \in [0,A^{-1}(T)]} \mathbb{E} \left[ Z_t^{p} \right] < \infty \qquad \textrm{for }  \quad p > - 2 \mu_0,  $$
	   see Lemma \ref{lem:classcial}(ii), which yields that
	    $$  \mathbb{E} \int_0^T \left( \theta(t)^2 X_t \right)^{2\sk-1} dt < \infty $$ 
holds if
	   $$ \sk+\mu_0 > 1/2.$$ Here we have used again 
	   $$  \frac{1}{T} \int_0^T  \mathbb{E}   \big[ X_t^{2\sk-1} \big] dt  \leq \sup_{t \in [0,A^{-1}(T)]}  \mathbb{E} \big[ \widetilde{X}_t^{2\sk-1} \big]  \leq  \sup_{t \in [0,A^{-1}(T)]}   \mathbb{E}  \big[ Z_t^{2\sk-1} \big].$$
(b) Now we need to establish that
 $$  \mathbb{E} \int_0^T \left( \theta(t)^2 (1-X_t) \right)^{2\sk-1} dt < \infty  $$ 
holds if 
$$ \sk+\mu_1 > 1/2$$
   where
	   $$ \mu_1 := - \max_{t \in [0,T]} \frac{a(t,1)}{\theta(t)^2}.$$
This can be done, e.g., by using the reflected process  $\widetilde{X}_t^r=1-\widetilde{X}_t$, which satisfies the SDE
	\begin{align*} \widetilde{X}_t^r&= 1-x_0 -\int_0^{t} \frac{a(A(s),1-\widetilde{X}^r_{s})}{\theta(A(s))^2}ds + \int_0^{t} \sqrt{(\widetilde{X}_{s}^r(1-\widetilde{X}^r_s))^+} d\widetilde{W}^r_s, \qquad t \in [0,A^{-1}(T)], \end{align*}
	where $\widetilde{W}^r$ is a Brownian motion given by
	$$ \widetilde{W}^r_t = - \widetilde{W}_t, \qquad t \in [0,A^{-1}(T)],$$
	  and
	   $$ a_{cp}(s,x)=\mu_1- K \left( \max_{\tau \in [0,A^{-1}(T)]} \frac{1}{\theta(A(\tau))^2} \right) x. $$	   
	   Here $K$ is again the Lipschitz constant of $a$ with respect to $x$ from assumption (L). Since
	   $$a(A(s),1-x) = a(A(s),1) + a(A(s),1-x)-a(A(s),1)$$
	   we have that
	   $$ - \frac{a(A(s),1-x)}{\theta(A(s))^2} \geq a_{cp}(s,x), \qquad s \in [0,A^{-1}(T)], \,\ x \in \mathbb{R}.$$
	   Therefore, another application of the comparison result for SDEs, see Proposition \ref{prop:comparison} in the Appendix, yields that
	   $$ P(\widetilde{X}_t^r \geq Z_t \geq 0, \, t \in [0,A^{-1}(T)])=1, $$
	   where $Z$ is given by the SDE
	   $$  Z_t = x_0 +\int_0^{t} a_{cp}(s,Z_s) ds + \int_0^{t}  \sqrt{(Z_{s}(1-Z_s))^+} d\widetilde{W}^r_s , \qquad t \in [0,A^{-1}(T)].$$
	   Now we have
	   $$  \sup_{t \in [0,A^{-1}(T)]} \mathbb{E} \left[ Z_t^{p} \right] < \infty \qquad \textrm{for }  \quad p > - 2 \mu_1,  $$
	   see Lemma \ref{lem:classcial}(ii), which yields that
	   \begin{align*}  \mathbb{E} \int_0^T \left( \theta(t)^2 (1-X_t) \right)^{2\sk-1} dt < \infty,  \end{align*} 
	  if
	   $$ \sk+\mu_1 > 1/2.$$

	  (c)  Since
	  \begin{align*}
	   (x(1-x))^+)^{2 \sk -1}
	  	&\le  2^{1-2\sk} \left( (x^+)^{2 \sk -1} \mathbbm{1}_{ \{  x \leq 1/2 \} } + ((1-x)^+)^{2 \sk -1} \mathbbm{1}_{ \{ x >1/2 \} } \right) \\ & \le  2^{1-2\sk} \left( (x^+)^{2 \sk -1}   + ((1-x)^+)^{2 \sk -1}  \right),
	  \end{align*} we obtain from (a), (b) and the Minkowski inequality that
  $$  \mathbb{E} \int_0^T \left( \theta(t)^2 X_t(1-X_t) \right)^{2\sk-1} dt< \infty$$
  if  $\sk+\mu_i >1/2$ for $i=0,1$. Assertion (ii) now follows from Theorem \ref{thm:up-bound}.
	\end{proof}

Thus, the relative strength of the upward and downward drift, respectively,  with respect to $\theta$ at the boundaries of the domain of the SDE influences the convergence rate for these prototype equations with a square root diffusion coefficient.

\smallskip

However, for $c_3$ already $\mu_0>0$ is  sufficient  to obtain convergence order $1/2-\varepsilon$.
\begin{proposition}\label{prop:cklas}
 Let $x_0 >0$, $\ga \in (1/2,1)$ and $\theta \in \mathcal{C}^{1/2}([0,T];(0,\infty))$. Moreover, assume that the drift coefficient
	$a$ satisfies	(HLG), (L)  and assume furthermore that $$ \mu_0 := \min_{t \in [0,T]} \frac{a(t,0)}{\theta(t)^2} > 0.$$
	Then the unique strong solution of 
	$$   	dX_t = a(t,X_t)dt + \theta(t) \left( X_t^+\right)^{\ga}dW_t,  \quad t\in[0,T], \qquad X_0=x_0,$$
	satisfies $ P( X_t \geq 0, t \in [0,T])=1$ and we have
	\begin{equation*}
		\limsup_{N \rightarrow \infty} \,\, N^{\lambda} \, \sup_{k=0, \ldots, N} \mathbb{E} \left| X_{t_k}- x_k^{(N)} \right| =0	
	\end{equation*}		
	for all $$\lambda <1/2.$$
\end{proposition}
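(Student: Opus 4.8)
The plan is to apply Theorem \ref{thm:up-bound} with $\ga \in (1/2,1)$ and the choice $\sk = 1-\ga \in (0,1/2)$, which gives convergence order $\lambda < 1/2 - \sk = \ga - 1/2$ directly. But that is weaker than the claimed $\lambda < 1/2$, so the real point is that condition \eqref{ass-k-thm} actually holds for \emph{all} $\sk \in [0,1-\ga]$, and in particular for $\sk = 0$, which then yields order $1/2-\varepsilon$. So the task reduces to verifying
$$ \int_0^T \mathbb{E}\left[ \left(\theta(t)^2 X_t^+\right)^{\ga(2\sk-2)/1}\right]dt<\infty $$
for $\sk=0$, i.e.\ $\int_0^T \mathbb{E}[(\theta(t)^2 X_t)^{\ga(2\ga-2)/\ga}]\,dt$; more precisely, since here $\sigma(t,x)=\theta(t)^{1/\ga}x^+$ and the diffusion coefficient is $\sigma^{\ga}=\theta(t)(x^+)^{\ga}$, condition \eqref{ass-k-thm} with $\sk=0$ reads
$$ \int_0^T \mathbb{E}\left[ \left(\theta(t)^{1/\ga} X_t^+\right)^{2\ga-2}\right]dt<\infty,$$
which, because $\theta$ is bounded away from $0$ and $\infty$, is equivalent to $\int_0^T \mathbb{E}[(X_t^+)^{2\ga-2}]\,dt<\infty$, i.e.\ a negative-moment bound of order $2-2\ga<1$ on $X$.

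First I would check assumptions (HLG), (L) and (P) on $\sigma(t,x)=\theta(t)^{1/\ga}x^+$ exactly as in the proof of Proposition \ref{prop:cir-wf}: (P) is immediate, and (L) and (HLG) follow from $\|\theta^{1/\ga}\|_\infty<\infty$ and $\theta^{1/\ga}\in\mathcal{C}^{1/2}$ (which holds since $\theta$ is positive, bounded, and $1/2$-H\"older, and $x\mapsto x^{1/\ga}$ is locally Lipschitz on compact subsets of $(0,\infty)$). Then I would run the same time-change argument as in Proposition \ref{prop:cir-wf}: set $\Theta(t)=\int_0^t\theta^2(s)\,ds$, $A=\Theta^{-1}$, and $\widetilde X_t = X_{A(t)}$, so that
$$ \widetilde X_t = x_0 + \int_0^t \frac{a(A(s),\widetilde X_s)}{\theta(A(s))^2}\,ds + \int_0^t (\widetilde X_s^+)^{\ga}\,d\widetilde W_s, \qquad t\in[0,A^{-1}(T)],$$
and $\int_0^T \mathbb{E}[(X_t^+)^{2\ga-2}]\,dt \le \mathrm{const}\cdot\sup_{t\in[0,A^{-1}(T)]}\mathbb{E}[(\widetilde X_t^+)^{2\ga-2}]$. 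Using $a(A(s),x)=a(A(s),0)+(a(A(s),x)-a(A(s),0))$ and the Lipschitz bound (L), I would bound the drift below by the affine function $a_{cp}(s,x)=\mu_0 - K(\max_\tau \theta(A(\tau))^{-2})x$ and invoke the comparison theorem (Proposition \ref{prop:comparison}) to get $\widetilde X_t \ge Z_t \ge 0$, where $Z$ solves the autonomous-drift CKLS-type SDE $dZ_t = a_{cp}(t,Z_t)\,dt + (Z_t^+)^{\ga}\,d\widetilde W_t$. Then the classical negative-moment bounds for CKLS processes (Lemma \ref{lem:classcial}, the relevant CKLS part) should give $\sup_t \mathbb{E}[Z_t^{p}]<\infty$ for a range of negative $p$ that, crucially, includes $p = 2\ga-2$ \emph{whenever} $\mu_0>0$ — this is the structural difference from the square-root case, where CKLS with $\ga>1/2$ has all negative moments finite as soon as the process stays positive, without needing $\mu_0$ large.

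The step I expect to be the genuine obstacle is exactly this last one: confirming that the relevant lemma (Lemma \ref{lem:classcial}) really does yield $\mathbb{E}[Z_t^{2\ga-2}]$ bounded uniformly in $t$ under the sole hypothesis $\mu_0>0$, and not just $\mu_0$ above some threshold. For $\ga\in(1/2,1)$ the diffusion $(z^+)^{\ga}$ vanishes slower than $\sqrt{z^+}$ near $0$, so the process is pushed away from the origin strongly enough that negative moments of order $<1$ should be finite once there is any strictly positive drift at $0$; but the precise statement and its range of validity is what the proof rests on, and I would want to cite Lemma \ref{lem:classcial} carefully and check the exponent bookkeeping ($2-2\ga<1$ versus the moment range the lemma provides). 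Once that is in place, condition \eqref{ass-k-thm} holds with $\sk=0$, and Theorem \ref{thm:up-bound} delivers $\limsup_{N\to\infty} N^\lambda \sup_k \mathbb{E}|X_{t_k}-x_k^{(N)}| = 0$ for all $\lambda<1/2$, completing the proof.
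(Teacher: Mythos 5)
Your argument is essentially the paper's own proof: run the time change and comparison exactly as in Proposition \ref{prop:cir-wf}(i), then use Lemma \ref{lem:classcial}(iii) to get finiteness of the inverse moments of the dominating CKLS process, so that condition \eqref{ass-k-thm} holds with $\sk=0$ and Theorem \ref{thm:up-bound} yields order $\lambda<1/2$. The step you flag as the potential obstacle is not one: Lemma \ref{lem:classcial}(iii) gives $\sup_{t}\mathbb{E}[V_t^{p}]<\infty$ for \emph{all} $p\in\mathbb{R}$ whenever $\kappa,\lambda,\theta>0$, and your affine comparison drift $a_{cp}(s,x)=\mu_0-K'x$ is of this form with $\kappa=K'>0$, $\lambda=\mu_0/K'>0$, so $\mu_0>0$ alone suffices and the exponent $2\ga-2$ is covered.
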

\begin{proof}
	The proof can be done along the same lines as the proof of case (i) of the previous proposition. The only and crucial difference is, that  Lemma \ref{lem:classcial}(iii) gives us the existence off all inverse moments of the CKLS process. Thus we can take always $\sk=0$ in Theorem \ref{thm:up-bound}.
\end{proof}

	\section{Proof of Theorem  \ref{thm:up-bound} }\label{proof-main-result}
	\subsection{Preliminaries}
	Before we start with the proof of our main result, we will present some results that will help us later. First, we define a time-continuous version of the Euler scheme  by $\bar{x}^{(N)}_t=\bar{x}_t$ with
	\begin{equation}\label{euler-cont}
		\bar{x}_t = \bar{x}_{\eta(t)} + a(\eta(t),\bar{x}_{\eta(t)})(t-\eta(t)) + \sigma(\eta(t),\bar{x}_{\eta(t)})^{\ga}(W_t-W_{\eta(t)}), \qquad t \in [0,T].
	\end{equation}
	Here, we denote $\eta(t)=\max\{t_k\in\{t_0,t_1,...,t_N\}: t_k\le t\}$. Clearly, we have $x_k=\overline{x}_{t_k}$ for $k=0, \ldots, N$.

	Note that (HLG) and (L) imply that $a$ and $\sigma$ satisfy a linear growth condition, that is there exists $C>0$ such that
	$$ |a(t,x)|+|\sigma(t,x)| \leq C (1+|x|) \qquad \textrm{for all} \quad  t\in [0,T],\,  x\in \mathbb{R}.$$
	Since 
	$$ x^{\ga} \leq 1 + x \qquad \textrm{for all} \quad x\in [0, \infty),$$
	also $\sigma^{\ga}$ satisfies a linear growth condition. Thus, we obtain the following result by standard computations.

	\begin{lemma}\label{bounded}
		Let $p\ge 1$ and Assumption \ref{ass-1} hold. Then, there exist constants $C_p>0$ such that
		\begin{align*}
			\mathbb{E}\left[  \sup_{t\in [0,T]} |X_t|^p \right]  + \sup_{0 \leq s< t \leq  T } \mathbb{E} \left[  \frac{|X_t-X_s|^p}{|t-s|^{p/2}} \right] \leq C_p
		\end{align*}
	and 
	\begin{displaymath}
	\sup_{N \in \mathbb{N}}	\mathbb{E}\left[  \sup_{t\in [0,T]} |\bar{x}_t^{(N)}|^p \right]  + \sup_{N \in \mathbb{N}}	 \sup_{0 \leq s< t \leq  T } \mathbb{E} \left[ \frac{ | \bar{x}_t^{(N)} - \bar{x}_s^{(N)}|^p}{|t-s|^{p/2}}  \right]  \leq C_p.
	\end{displaymath}

\end{lemma}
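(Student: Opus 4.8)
The statement is the usual a priori moment estimate, and since the only structural input is the linear growth of $a$ and $\sigma^{\ga}$ already recorded above, no regularity of $\sigma^{\ga}$ beyond this is needed. The plan is to treat first the case $p\ge 2$; the case $1\le p<2$ then follows by Jensen's inequality, applied pathwise to the concave map $y\mapsto y^{p/2}$, which yields $\mathbb{E}\big[\sup_{t}|X_t|^{p}\big]\le \big(\mathbb{E}\big[\sup_{t}|X_t|^{2}\big]\big)^{p/2}$ and likewise for the increments and for $\bar x^{(N)}$. So assume $p\ge 2$ from now on.

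For the bound on $\mathbb{E}\big[\sup_{t\in[0,T]}|X_t|^{p}\big]$ I would apply It\^o's formula to the smooth function $\phi(x)=(1+x^{2})^{p/2}$, composed with a localizing sequence of stopping times $\tau_{n}=\inf\{t:\ |X_t|\ge n\}\wedge T$ to make the stochastic integral a true martingale. Taking expectations, the drift and It\^o-correction terms are bounded, using the linear growth of $a$ and of $\sigma^{\ga}$, by $C\int_0^{t}\big(1+\mathbb{E}\big[\sup_{u\le s\wedge\tau_n}|X_u|^{p}\big]\big)ds$; the supremum over the martingale part is controlled by the Burkholder--Davis--Gundy inequality, which produces (after Young's inequality to absorb a small multiple of $\mathbb{E}[\sup_{u\le t\wedge\tau_n}|X_u|^{p}]$ into the left-hand side) an additional term of the same form $C\int_0^{t}\big(1+\mathbb{E}[\sup_{u\le s\wedge\tau_n}|X_u|^{p}]\big)ds$. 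Gronwall's lemma then gives a bound $C_p$ uniform in $n$, and Fatou's lemma as $n\to\infty$ finishes this part.

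For the increment bound, fix $0\le s<t\le T$ and write $X_t-X_s=\int_s^t a(u,X_u)\,du+\int_s^t \sigma(u,X_u)^{\ga}\,dW_u$. Jensen's inequality in the time variable gives $\mathbb{E}\big|\int_s^t a(u,X_u)\,du\big|^{p}\le |t-s|^{p-1}\int_s^t \mathbb{E}|a(u,X_u)|^{p}\,du\le C|t-s|^{p}$ by linear growth and the moment bound just proved, and BDG followed by the same Jensen step gives $\mathbb{E}\big|\int_s^t \sigma(u,X_u)^{\ga}dW_u\big|^{p}\le C_p\,\mathbb{E}\big(\int_s^t \sigma(u,X_u)^{2\ga}du\big)^{p/2}\le C_p|t-s|^{p/2}$; combining with Minkowski's inequality and dividing by $|t-s|^{p/2}$ yields the claimed uniform bound (note $|t-s|^{p/2}\le T^{p/2}$ makes the $|t-s|^{p}$ term harmless).

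For the continuous Euler scheme one proceeds identically, starting from $\bar x_t=x_0+\int_0^t a(\eta(u),\bar x_{\eta(u)})\,du+\int_0^t \sigma(\eta(u),\bar x_{\eta(u)})^{\ga}\,dW_u$; linear growth gives $|a(\eta(u),\bar x_{\eta(u)})|+|\sigma(\eta(u),\bar x_{\eta(u)})|^{\ga}\le C\big(1+\sup_{r\le u}|\bar x_r|\big)$, so the It\^o/BDG/Gronwall argument closes with a constant $C_p$ that does not depend on the mesh $N$; the increment estimate for $\bar x^{(N)}$ is obtained exactly as for $X$, again using linear growth and the just-established $N$-uniform moment bound. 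The only mildly technical point throughout is the localization needed to legitimize It\^o's formula and the martingale property, together with the Young-inequality absorption step; there is no genuine obstacle, since the degeneracy of $\sigma^{\ga}$ at the zeros of $\sigma$ never enters — only its linear growth is used. $\hfill\qed$
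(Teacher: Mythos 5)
Your argument is correct and matches the paper's intent: the paper proves this lemma only by observing that (HLG), (L) and $x^{\ga}\le 1+x$ give linear growth of $a$ and $\sigma^{\ga}$ and then invoking ``standard computations,'' which are exactly the localized It\^o/BDG/Gronwall estimates and the BDG-plus-Jensen increment bounds you spell out, with constants depending only on the linear-growth constant and hence uniform in $N$ for the Euler scheme. No discrepancy to report.
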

	
	The next lemma gives us a bound for the  expected local time in zero of a semimartingale. It is taken from \cite{DA}.
	\begin{lemma}\label{lemmadeAngelis}
		For any $\delta\in(0,1)$ and any real-valued, continuous semimartingale $Y=(Y_t)_{t \in [0,T]}$, we have
		\begin{displaymath}
			\begin{aligned}
				\mathbb{E}\left[L^0_t(Y)\right]\le 4\delta&-2\mathbb{E}\left[\int_{0}^{t}\left(\mathbbm{1}_{\{Y_s\in(0,\delta)\}}+\mathbbm{1}_{\{Y_s \geq \delta\}}e^{1-\frac{Y_s}{\delta}}\right)dY_s\right]\\
				&+\frac{1}{\delta}\mathbb{E}\left[\int_{0}^{t}\mathbbm{1}_{\{Y_s>\delta\}}e^{1-\frac{Y_s}{\delta}}d\langle Y\rangle_s\right], \qquad  \qquad t \in [0,T].
			\end{aligned}
		\end{displaymath}
	\end{lemma}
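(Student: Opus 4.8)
Here is how I would prove Lemma \ref{lemmadeAngelis}.

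\emph{Strategy.} The plan is to replace the non-smooth map $x\mapsto x^+$ by a carefully chosen almost-$C^2$ convex approximant $\varphi_\delta$, chosen precisely so that the first-order term and the It\=o correction term produced by It\=o's formula are exactly the two integrals appearing on the right-hand side, and to control the approximation error $x^+-\varphi_\delta(x)$ together with the local time via Tanaka's formula. Concretely, I would set
\[
\varphi_\delta(x)=\bigl(x-2\delta+\delta\, e^{1-x/\delta}\bigr)\,\mathbbm{1}_{\{x>\delta\}},\qquad x\in\mathbb{R}.
\]
A direct computation shows $\varphi_\delta\in\mathcal{C}^1(\mathbb{R};\mathbb{R})$ with
\[
\varphi_\delta'(x)=\bigl(1-e^{1-x/\delta}\bigr)\mathbbm{1}_{\{x>\delta\}},\qquad \varphi_\delta''(x)=\tfrac1\delta\, e^{1-x/\delta}\,\mathbbm{1}_{\{x>\delta\}}\ \ (x\neq\delta),
\]
so that $\varphi_\delta'$ is Lipschitz, $0\le \varphi_\delta''\le 1/\delta$ (hence $\varphi_\delta$ is convex), and $\varphi_\delta''$ has a single jump at $x=\delta$. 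The two facts I would record are the pointwise estimate $0\le x^+-\varphi_\delta(x)\le 2\delta$ for all $x\in\mathbb{R}$, and the Lebesgue-a.e.\ identity $\varphi_\delta'(x)-\mathbbm{1}_{\{x>0\}}=-\bigl(\mathbbm{1}_{\{x\in(0,\delta)\}}+\mathbbm{1}_{\{x\ge\delta\}}e^{1-x/\delta}\bigr)$; both are checked by distinguishing the cases $x\le 0$, $0<x<\delta$ and $x>\delta$.

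\emph{Main computation.} Tanaka's formula for the continuous semimartingale $Y$ gives $Y_t^+=Y_0^++\int_0^t\mathbbm{1}_{\{Y_s>0\}}\,dY_s+\tfrac12 L_t^0(Y)$, and It\=o's formula applied to $\varphi_\delta$ — valid since $\varphi_\delta\in\mathcal{C}^1$ with Lipschitz (hence absolutely continuous) derivative and locally bounded $\varphi_\delta''$, either by mollifying $\varphi_\delta$ and passing to the limit using $0\le\varphi_\delta''\le 1/\delta$ and dominated convergence, or by the It\=o--Tanaka formula for convex functions together with the occupation times formula — gives $\varphi_\delta(Y_t)=\varphi_\delta(Y_0)+\int_0^t\varphi_\delta'(Y_s)\,dY_s+\tfrac12\int_0^t\varphi_\delta''(Y_s)\,d\langle Y\rangle_s$. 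Subtracting these two identities and solving for the local time yields, pathwise,
\[
\tfrac12 L_t^0(Y)=\bigl(Y_t^+-\varphi_\delta(Y_t)\bigr)-\bigl(Y_0^+-\varphi_\delta(Y_0)\bigr)+\int_0^t\bigl(\varphi_\delta'(Y_s)-\mathbbm{1}_{\{Y_s>0\}}\bigr)\,dY_s+\tfrac12\int_0^t\varphi_\delta''(Y_s)\,d\langle Y\rangle_s.
\]
Using $Y_t^+-\varphi_\delta(Y_t)\le 2\delta$ and $Y_0^+-\varphi_\delta(Y_0)\ge 0$, taking expectations, and inserting the a.e.\ identity for $\varphi_\delta'-\mathbbm{1}_{\{\cdot>0\}}$ and the explicit formula for $\varphi_\delta''$, I would obtain after multiplying by $2$ exactly the asserted inequality
\[
\mathbb{E}\bigl[L_t^0(Y)\bigr]\le 4\delta-2\,\mathbb{E}\!\left[\int_0^t\!\bigl(\mathbbm{1}_{\{Y_s\in(0,\delta)\}}+\mathbbm{1}_{\{Y_s\ge\delta\}}e^{1-Y_s/\delta}\bigr)dY_s\right]+\tfrac1\delta\,\mathbb{E}\!\left[\int_0^t\!\mathbbm{1}_{\{Y_s>\delta\}}e^{1-Y_s/\delta}\,d\langle Y\rangle_s\right].
\]

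\emph{Where the difficulty lies.} The construction of $\varphi_\delta$ is essentially forced: matching the $dY_s$- and $d\langle Y\rangle_s$-coefficients in It\=o's formula to the target right-hand side determines $\varphi_\delta'$ (and $\varphi_\delta''$) uniquely, and it then only remains to check that its primitive — normalized to vanish on $(-\infty,\delta]$ — approximates $x^+$ within $2\delta$. The genuinely delicate step is the rigorous use of It\=o's formula for the merely $\mathcal{C}^1$ (not $\mathcal{C}^2$) function $\varphi_\delta$ and the passage to expectations for a general continuous semimartingale; this is handled by a standard mollification/localization argument (stopping $Y$, $\langle Y\rangle$ and the total variation of its finite-variation part), with the sign structure $\varphi_\delta''\ge 0$, $\varphi_\delta''\le 1/\delta$ and $x^+-\varphi_\delta(x)\in[0,2\delta]$ ensuring that all estimates point in the required direction (if the right-hand side is infinite the inequality is trivial). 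No further input beyond Tanaka's formula, the generalized It\=o formula and these elementary pointwise bounds is needed.
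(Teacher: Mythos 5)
The paper does not prove Lemma \ref{lemmadeAngelis} at all — it is quoted from \cite{DA} — so there is no in-text argument to compare with; your proof is correct and self-contained, and it is exactly the kind of test-function argument one expects behind the cited result. The key facts check out: $\varphi_\delta$ is $\mathcal{C}^1$ and convex with $0\le\varphi_\delta''\le 1/\delta$ off $x=\delta$, one has $0\le x^+-\varphi_\delta(x)\le 2\delta$, and the identity $\varphi_\delta'(x)-\mathbbm{1}_{\{x>0\}}=-\bigl(\mathbbm{1}_{\{x\in(0,\delta)\}}+\mathbbm{1}_{\{x\ge\delta\}}e^{1-x/\delta}\bigr)$ in fact holds for \emph{every} $x\in\mathbb{R}$, not merely Lebesgue-a.e.; you should state it in this pointwise form, since substituting an a.e.-in-space identity into the $dY_s$-integral would not be justified in general (the finite-variation part of $Y$ can charge spatial Lebesgue-null sets), whereas the everywhere identity makes the substitution immediate. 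Combining Tanaka's formula with the generalized It\=o (It\=o--Tanaka plus occupation times, or mollification) formula for the convex $\mathcal{C}^1$ function $\varphi_\delta$ gives your pathwise identity, and the bounds $Y_t^+-\varphi_\delta(Y_t)\le 2\delta$ and $Y_0^+-\varphi_\delta(Y_0)\ge 0$ then yield the claim. Two harmless caveats. First, your Tanaka formula $Y_t^+=Y_0^++\int_0^t\mathbbm{1}_{\{Y_s>0\}}\,dY_s+\frac{1}{2}L_t^0(Y)$ uses the normalization $|Y_t|=|Y_0|+\int_0^t\sign(Y_s)\,dY_s+L_t^0(Y)$, under which your constants reproduce the lemma exactly; with the paper's own convention \eqref{TM-formula} (which has $2L_t^0(Y)$, hence $Y_t^+=Y_0^++\int_0^t\mathbbm{1}_{\{Y_s>0\}}\,dY_s+L_t^0(Y)$) the same computation gives the sharper bound with $2\delta$, a factor $1$ on the $dY$-term and $\frac{1}{2\delta}$ on the quadratic-variation term, which still implies the stated inequality because $\mathbb{E}[L_t^0(Y)]\ge 0$ forces the right-hand side to be nonnegative — but you should say explicitly which normalization you work with. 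Second, for a completely general continuous semimartingale the expectations on the right-hand side need not exist; the lemma is to be read under the implicit integrability assumptions (satisfied by the error process $R$ in the paper's application), and your localization/stopping remark is the appropriate way to make the passage to expectations rigorous.
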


Here $(L_t^0(Y))_{t\in [0,T]}$ is the local time  of $Y$ in $x=0$. For almost all $\omega \in \Omega$  the map $[0,T] \ni t\mapsto [L_t^0(Y)](\omega) \in \mathbb{R}$ is continuous and non-decreasing with $L_0^0(Y)=0$. 
For more background on local times see, e.g., Chapter III.7 in \cite{KS}. For our purposes, it suffices to mention the Tanaka-Meyer formula for continuous semimartingales of the form
$$Y_t=y_0+ \int_0^t y^{A}_s ds + \int_0^t y_s^M dW_s, \qquad t \in[0,T],$$
where $(y_t^A)_{t \in [0,T]}$ and $(y_t^M)_{t \in [0,T]}$ are  continuous, square-integrable and adapted processes and $y_0 \in \mathbb{R}$. Then the Tanaka-Meyer formula, see, e.g., Equation 7.9 in Chapter III in \cite{KS}, states that
\begin{equation}
	\label{TM-formula}
	|Y_t|= |y_0| + \int_0^t \sign(Y_s) y_s^A ds +  \int_0^t \sign(Y_s) y_s^M dW_s +2 L_t^0(Y), \qquad t \in [0,T]. 
\end{equation}

Finally,	the following inequalities will be very useful in our computations.
	\begin{lemma}\label{squareroot}
		Let $\ga \in \left[\frac{1}{2},1\right)$ and $ \beta \in (0,1]$.
 Then, we have
		\begin{equation}\label{ineq_1}
			\begin{aligned}
				\left|x^\ga-y^{\ga}\right| \leq 2
				x^{-(1-\ga)}\left|x-y\right| \qquad \textrm{for all } \quad  x>0, \,  y\ge0,
			\end{aligned}
		\end{equation}
	and 
		\begin{equation}\label{ineq_2}
			\begin{aligned}
				\left|x^\ga-y^{\ga}\right|\le 2
				x^{-(1-\ga)\beta}\left|x-y\right|^{\beta +\ga(1-\beta)}  \qquad \textrm{for all } \quad  x>0, \, y\ge0.
			\end{aligned}
		\end{equation}
	\end{lemma}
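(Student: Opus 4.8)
The plan is to derive \eqref{ineq_1} by a scaling reduction and then obtain \eqref{ineq_2} by interpolating \eqref{ineq_1} against the standard H\"older estimate for $t\mapsto t^{\ga}$. As a preliminary I would record the subadditivity bound $(a+b)^{\ga}\le a^{\ga}+b^{\ga}$ for $a,b\ge 0$ and $\ga\in(0,1]$, which follows by normalizing: for $a+b>0$ one has $\big(\tfrac{a}{a+b}\big)^{\ga}+\big(\tfrac{b}{a+b}\big)^{\ga}\ge \tfrac{a}{a+b}+\tfrac{b}{a+b}=1$, using $s^{\ga}\ge s$ for $s\in[0,1]$. Applying this with $a=y$, $b=x-y$ when $x\ge y\ge 0$ (and symmetrically when $y>x\ge 0$) gives
\[ |x^{\ga}-y^{\ga}|\le |x-y|^{\ga}\qquad\text{for all } x,y\ge 0. \]

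For \eqref{ineq_1}, since $x>0$ I would divide both sides by $x^{\ga}$ and put $t:=y/x\in[0,\infty)$; the claim then reduces to $|1-t^{\ga}|\le 2|1-t|$. In fact the sharper bound $|1-t^{\ga}|\le|1-t|$ holds: for $t\in[0,1]$ we have $t^{\ga}\ge t$, hence $0\le 1-t^{\ga}\le 1-t$, while for $t>1$ we have $t^{\ga}\le t$, hence $0\le t^{\ga}-1\le t-1$; both monotonicity facts hold because $\ga\le 1$. This proves \eqref{ineq_1} (with constant $1$, and a fortiori with $2$).

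For \eqref{ineq_2} I would interpolate the two bounds just obtained: writing $|x^{\ga}-y^{\ga}|=|x^{\ga}-y^{\ga}|^{\beta}\,|x^{\ga}-y^{\ga}|^{1-\beta}$, I estimate the first factor by $\big(2x^{-(1-\ga)}|x-y|\big)^{\beta}$ via \eqref{ineq_1} and the second by $\big(|x-y|^{\ga}\big)^{1-\beta}$ via the H\"older bound above; multiplying and using $2^{\beta}\le 2$ for $\beta\in(0,1]$ yields precisely $2\,x^{-(1-\ga)\beta}|x-y|^{\beta+\ga(1-\beta)}$. There is no genuine obstacle here; the only points that need a little care are the two-case comparison in the scaling step and the observation that raising the two one-sided estimates to the non-negative powers $\beta$ and $1-\beta$ is legitimate because all quantities involved are non-negative.
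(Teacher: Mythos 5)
Your proof is correct. For \eqref{ineq_2} you argue exactly as the paper does: split $|x^\ga-y^\ga|=|x^\ga-y^\ga|^{\beta}\,|x^\ga-y^\ga|^{1-\beta}$, bound the second factor by the H\"older estimate $|x^\ga-y^\ga|\le|x-y|^{\ga}$ (which the paper invokes via concavity and you justify via subadditivity of $t\mapsto t^{\ga}$) and the first factor by \eqref{ineq_1}. The genuine difference is in \eqref{ineq_1}: the paper proves the symmetric inequality $|x^\ga-y^\ga|\,(x^{1-\ga}+y^{1-\ga})\le 2|x-y|$ by reducing to $z=x/y>1$ and showing that $f(z)=2(z-1)-(z^\ga-1)(z^{1-\ga}+1)$ is nondecreasing, using $f'(z)\ge 1-\ga>0$; you instead scale by $x$, reduce to $|1-t^{\ga}|\le|1-t|$ for $t=y/x\ge 0$, and settle this by the two monotonicity facts $t^{\ga}\ge t$ on $[0,1]$ and $t^{\ga}\le t$ on $[1,\infty)$. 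Your route is more elementary (no derivative computation) and in fact sharper: it gives \eqref{ineq_1} with constant $1$, and applying it with the roles of $x$ and $y$ exchanged yields $|x^\ga-y^\ga|\le \max(x,y)^{-(1-\ga)}|x-y|$, which implies the paper's intermediate inequality since $x^{1-\ga}+y^{1-\ga}\le 2\max(x,y)^{1-\ga}$. Either argument supplies everything that is needed later in the error analysis, where only the stated constant $2$ is used.
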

	\begin{proof}
	(i)	We first prove the assertion
		\begin{equation*}
			|x^\ga-y^\ga|\left(x^{1-\ga}+y^{1-\ga}\right)\le 2|x-y|
		\end{equation*}
	from which inequality \eqref{ineq_1} directly follows. For $x=y$, the statement holds trivially. Without loss of generality, we can assume that $x>y$ and prove the assertion
	\begin{equation*}
		(x^\ga-y^\ga)\left(x^{1-\ga}+y^{1-\ga}\right)\le 2(x-y).
	\end{equation*}
	We start with
	\begin{align*}
		(x^\ga-y^\ga)\left(x^{1-\ga}+y^{1-\ga}\right)&=y^\ga\left(\left(\frac{x}{y}\right)^\ga-1\right)y^{1-\ga}\left(\left(\frac{x}{y}\right)^{1-\ga}+1\right)\\
		&=y\left(\left(\frac{x}{y}\right)^\ga-1\right)\left(\left(\frac{x}{y}\right)^{1-\ga}+1\right).
	\end{align*}
	Now set $z:=\frac{x}{y}$. Notice that $z>1$. Therefore, it remains to show that
	\begin{align}
		(z^\ga-1)\left(z^{1-\ga}+1\right)\le 2(z-1)  \label{step1-sqr}
	\end{align} for $z>1$.
	Next, we define
	\begin{align*}
		f(z):=2(z-1)-(z^\ga-1)\left(z^{1-\ga}+1\right).
	\end{align*}
We have $f(1)=0$ and if we are able to show that $f$ is increasing for all $z \geq 1$, then Equation \eqref{step1-sqr}
follows. We have	\begin{align*}
		f'(z)=1-\ga z^{\ga-1}+(1-\ga)z^{-\ga}
	\end{align*}
and consequently
$$ 	f'(z)\geq 1-\ga >0$$ for all $ z\geq 1$. This concludes the proof of \eqref{step1-sqr} and the first assertion.

(ii) For the second assertion note that
$$ |x^{\ga}-y^{\ga}| \leq  |x-y|^{\ga} $$
by concavity. Equation \eqref{ineq_2} follows then from
$$ |x^{\ga}-y^{\ga}|=|x^{\ga}-y^{\ga}|^{\beta}|x^{\ga}-y^{\ga}|^{1-\beta} \leq |x^{\ga}-y^{\ga}|^{\beta} |x-y|^{\ga(1-\beta)} $$ and inequality \eqref{ineq_1}.
	\end{proof}
	\subsection{Proof of Theorem \ref{thm:up-bound}}
	
	 Theorem  \ref{thm:up-bound} is a direct consequence of the following Theorem \ref{thm:one} with
	 $$\beta=1-\frac{1}{1-\ga} \left( \sk + \frac{\varepsilon}{2} \right)$$ and $\varepsilon >0$ arbitrarily small.
	 The strategy of the proof of Theorem \ref{thm:one} is based on the strategy of \cite{MiNe}, which  in turn was inspired by \cite{DA}.
	
	\begin{theorem}\label{thm:one}
		Let $\bar{x}$ be given as in \eqref{euler-cont} and let Assumption \ref{ass-1} be fulfilled.
		Moreover, assume that there exists $\beta \in (0,1]$ and $\varepsilon >0$ such that
		\begin{equation} \label{ass-k-proof}
			\mathbb{E}\left[\int_{0}^{T} 	\sigma(s,X_s)^{2(\ga-1)\beta-\varepsilon }	ds\right] < \infty.
		\end{equation}
		Then, we have  
		\begin{displaymath}
		\limsup_{N \rightarrow \infty}  \, N^{\lambda}	\sup\limits_{t\in[0,T]}\mathbb{E} \left|X_t-\bar{x}_t^{(N)}\right|  =0
		\end{displaymath}
	for all $\lambda < \ga -1/2 + \beta(1-\ga)$.	\end{theorem}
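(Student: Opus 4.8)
The aim is to bound $\sup_{t\in[0,T]}\mathbb{E}|X_t-\bar x_t|$ where $\bar x$ is the continuous Euler scheme. Write $e_t = X_t - \bar x_t$. Then $e$ is a continuous semimartingale with
\[
e_t = \int_0^t \bigl(a(s,X_s)-a(\eta(s),\bar x_{\eta(s)})\bigr)\,ds + \int_0^t \bigl(\sigma(s,X_s)^{\ga}-\sigma(\eta(s),\bar x_{\eta(s)})^{\ga}\bigr)\,dW_s .
\]
The natural tool is the Tanaka–Meyer formula \eqref{TM-formula} applied to $e$, which gives
\[
\mathbb{E}|e_t| = \mathbb{E}\!\int_0^t \sign(e_s)\bigl(a(s,X_s)-a(\eta(s),\bar x_{\eta(s)})\bigr)\,ds + 2\,\mathbb{E} L_t^0(e),
\]
the stochastic integral having vanishing expectation after a localization argument (its integrand is bounded in $L^2$ by Lemma \ref{bounded} and linear growth). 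The drift term is the easy one: using (L), (HLG) and Lemma \ref{bounded} one gets $\mathbb{E}|a(s,X_s)-a(\eta(s),\bar x_{\eta(s)})| \le C(\mathbb{E}|e_s| + \Delta t^{1/2})$, so after Gronwall the whole estimate reduces to controlling $\mathbb{E} L_t^0(e)$.

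**Controlling the local time.** This is where Lemma \ref{lemmadeAngelis} enters, applied to $Y=e$ with a parameter $\delta>0$ to be optimized at the end. The three terms on the right of that lemma must be estimated. The term $4\delta$ is already of the right size. In the term $-2\mathbb{E}\int_0^t(\mathbbm 1_{\{e_s\in(0,\delta)\}}+\mathbbm 1_{\{e_s\ge\delta\}}e^{1-e_s/\delta})\,de_s$ the $dW_s$-part again vanishes in expectation after localization, and the $ds$-part is $\le C\int_0^t(\mathbb{E}|e_s|+\Delta t^{1/2})\,ds$ as before (the exponential weight is bounded by $1$). The genuinely delicate term is
\[
\frac{1}{\delta}\,\mathbb{E}\!\int_0^t \mathbbm 1_{\{e_s>\delta\}} e^{1-e_s/\delta}\, d\langle e\rangle_s = \frac{1}{\delta}\,\mathbb{E}\!\int_0^t \mathbbm 1_{\{e_s>\delta\}} e^{1-e_s/\delta}\,\bigl(\sigma(s,X_s)^{\ga}-\sigma(\eta(s),\bar x_{\eta(s)})^{\ga}\bigr)^2 ds .
\]
To bound the squared diffusion difference I would split it as $\le 2(\sigma(s,X_s)^{\ga}-\sigma(\eta(s),X_{\eta(s)})^{\ga})^2 + 2(\sigma(\eta(s),X_{\eta(s)})^{\ga}-\sigma(\eta(s),\bar x_{\eta(s)})^{\ga})^2$. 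The first summand involves only the true solution and time/space increments over a mesh interval; using \eqref{ineq_2} from Lemma \ref{squareroot} with the given $\beta$, together with (HLG), (L) and the moment/increment bounds of Lemma \ref{bounded}, one obtains a factor $\sigma(\eta(s),X_{\eta(s)})^{-2(1-\ga)\beta}$ times $\Delta t^{\beta+\ga(1-\beta)}$ (modulo cross terms from the time increment, handled similarly); the inverse-moment hypothesis \eqref{ass-k-proof} is precisely what makes $\mathbb{E}\!\int_0^T \sigma(\eta(s),X_{\eta(s)})^{-2(1-\ga)\beta}\,ds$ finite — here one needs the small loss $\varepsilon$ and a standard argument comparing $\sigma$ at $\eta(s)$ with $\sigma$ at $s$ (or a Hölder-inequality split absorbing the $\varepsilon$). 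The second summand is again bounded via \eqref{ineq_2} by $\sigma(\eta(s),\bar x_{\eta(s)})^{-2(1-\ga)\beta}|e_{\eta(s)}|^{2(\beta+\ga(1-\beta))}$; on the event $\{e_s>\delta\}$ one trades the exponential weight $e^{1-e_s/\delta}$ against the power $|e_s|^{\text{power}}/\delta$ using $\sup_{u>0} u^{q}e^{1-u/\delta}\le C_q\delta^q$, and couples the resulting $\sigma^{-2(1-\ga)\beta}$ with the inverse-moment integrability of $\bar x$ (obtainable, e.g., along the same comparison lines, or replaced by comparing $\bar x_{\eta(s)}$ with $X_{\eta(s)}$ when $e$ is small and controlling the large-$e$ part separately). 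This produces a bound of the form $\mathbb{E} L_t^0(e) \le C\delta + C\,\delta^{-1}\Delta t^{\beta+\ga(1-\beta)} + C\,\delta^{2(\beta+\ga(1-\beta))-1}\int_0^t\mathbb{E}|e_s|^{\cdots}\,ds + C\int_0^t \mathbb{E}|e_s|\,ds$.

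**Closing the argument.** Combining the drift estimate with the local-time estimate gives an integral inequality for $\phi(t):=\sup_{s\le t}\mathbb{E}|e_s|$ of the form $\phi(t)\le C\delta + C\delta^{-1}\Delta t^{\beta+\ga(1-\beta)} + C\int_0^t\phi(s)\,ds$ (after bounding the $|e_s|^{\text{power}}$ integral by $\phi$ up to constants, using $|e_s|\le$ a fixed power via Lemma \ref{bounded} and Hölder, or by first proving a preliminary crude rate). Gronwall's lemma then yields $\phi(T)\le C(\delta + \delta^{-1}\Delta t^{\beta+\ga(1-\beta)})$, and optimizing $\delta \sim \Delta t^{(\beta+\ga(1-\beta))/2}$ gives $\phi(T)\le C\,\Delta t^{(\beta+\ga(1-\beta))/2}$; recalling $\Delta t = T/N$ and that $(\beta+\ga(1-\beta))/2 = \ga-1/2+\beta(1-\ga) + (1-\ga)(1-\beta)/2 \ge \ga-1/2+\beta(1-\ga)$... more carefully, $(\beta+\ga(1-\beta))/2$ equals exactly the claimed exponent $\ga-1/2+\beta(1-\ga)$ after noting $\beta+\ga(1-\beta) = 2\ga-1+2\beta(1-\ga)$, which is the identity $\beta+\ga-\ga\beta = 2\ga-1+2\beta-2\ga\beta \iff 1-\ga = \beta-\ga\beta = \beta(1-\ga)$ — so one must instead run the local-time estimate keeping the sharper $\Delta t$-power, i.e. the correct bookkeeping gives rate $\ga-1/2+\beta(1-\ga)-\varepsilon'$ for every $\varepsilon'>0$, hence the $\limsup$ with $N^\lambda$ vanishes for all $\lambda$ below that threshold. \textbf{The main obstacle} is the last delicate term: making the inverse-moment hypothesis \eqref{ass-k-proof} interact correctly with the mesh-point evaluation $\sigma(\eta(s),\cdot)$ and with the Euler scheme $\bar x_{\eta(s)}$ (rather than the true $X_{\eta(s)}$), while keeping the powers of $\delta$ and $\Delta t$ balanced so that the final optimization produces exactly the exponent $\ga-1/2+\beta(1-\ga)$; the bound for the inverse moments of $\bar x$ along the trajectory, and the circular dependence on $\mathbb{E}|e_s|^{\text{power}}$, are the technical points requiring the most care.
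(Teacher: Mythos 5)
Your skeleton is the same as the paper's (Tanaka--Meyer via \eqref{TM-formula}, the local-time bound of Lemma \ref{lemmadeAngelis}, inequality \eqref{ineq_2}, H\"older with the $\varepsilon$-loss, Gronwall, then a choice of $\delta$), but the decisive step is not carried out and, as set up, would fail. By splitting the quadratic variation increment as $(\sigma(s,X_s)^{\ga}-\sigma(\eta(s),X_{\eta(s)})^{\ga})^2+(\sigma(\eta(s),X_{\eta(s)})^{\ga}-\sigma(\eta(s),\bar{x}_{\eta(s)})^{\ga})^2$ and then applying \eqref{ineq_2} to each piece, you place the singular factor on $\sigma(\eta(s),X_{\eta(s)})^{-2(1-\ga)\beta}$ and, worse, on $\sigma(\eta(s),\bar{x}_{\eta(s)})^{-2(1-\ga)\beta}$. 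Hypothesis \eqref{ass-k-proof} controls inverse powers of $\sigma(s,X_s)$ only: a ``standard argument comparing $\sigma$ at $\eta(s)$ with $\sigma$ at $s$'' does not exist for negative powers (closeness of $\sigma(\eta(s),X_{\eta(s)})$ to $\sigma(s,X_s)$ gives no upper bound on its inverse), and inverse moments of $\sigma$ along the Euler scheme are simply not available in this generality (e.g.\ the Euler scheme for CIR becomes negative with positive probability; comparison theorems do not apply to the discrete scheme). You flag this yourself as ``the main obstacle'', but the theorem's whole content is that this obstacle can be avoided: the paper exploits the asymmetry of \eqref{ineq_2} --- only the first argument must be positive --- and applies it directly with $x=\sigma(s,X_s)$, $y=\sigma(\eta(s),\bar{x}_{\eta(s)})$, so that the denominator is always $\sigma(s,X_s)$; the numerator $|\sigma(s,X_s)-\sigma(\eta(s),\bar{x}_{\eta(s)})|$ is then split into $\sigma(s,X_s)-\sigma(s,\bar{x}_s)$, $\sigma(s,\bar{x}_s)-\sigma(\eta(s),\bar{x}_s)$ and $\sigma(\eta(s),\bar{x}_s)-\sigma(\eta(s),\bar{x}_{\eta(s)})$, which (L), (HLG), Lemma \ref{bounded} and H\"older with exponent $p=\frac{2(1-\ga)\beta+\varepsilon}{2(1-\ga)\beta}$ control using only \eqref{ass-k-proof}. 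This also keeps the error power as $|R_s|$, matching the indicator and exponential weight in $R_s$, whereas your second summand produces $|e_{\eta(s)}|$, which the weight $e^{1-e_s/\delta}$ does not dominate.

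The final balancing is also off. Your choice $\delta\sim\Delta t^{(\beta+\ga(1-\beta))/2}$ balances only the terms $\delta$ and $\delta^{-1}\Delta t^{\beta+\ga(1-\beta)}$ and, as you notice yourself, yields $(\beta(1-\ga)+\ga)/2$, which equals the claimed exponent $\ga-\tfrac12+\beta(1-\ga)$ only when $\beta=1$; asserting that ``correct bookkeeping'' fixes this is not a proof. The correct choice (as in the paper) is $\delta=(\Delta t)^{1/2}$, combined with a further splitting of $\{R_s>\delta\}$ into $\{R_s\in(\delta,\delta^{\alpha})\}$ and $\{R_s\ge\delta^{\alpha}\}$ with $\alpha\in(0,1)$ close to $1$ (or, alternatively, your trade $\sup_{u>0}u^{q}e^{1-u/\delta}\le C_q\delta^{q}$, which is a legitimate variant once the weight and the power involve the same $R_s$); the binding terms are then $\delta^{-1}(\Delta t)^{\beta(1-\ga)+\ga}$ and $\delta^{\alpha(2\beta(1-\ga)+2\ga)-1}$, both of order $(\Delta t)^{\beta(1-\ga)+\ga-1/2}$ up to an arbitrarily small loss, which is exactly the claimed rate. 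So the proposal identifies the right tools but leaves a genuine gap at the one point where the argument is delicate, and the stated optimization does not produce the asserted exponent.
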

	\begin{proof}
		Define the error process $R=(R_t)_{t \in [0,T]}$ by $R_t=X_t-\bar{x}_t$. 
		
		(i) The Tanaka-Meyer formula, see Equation \eqref{TM-formula}, yields
		\begin{equation}\label{prooffirststep}
			\begin{aligned}
				\mathbb{E}\left[ \left|R_t\right|\right]  =& \mathbb{E}\left[ \int_{0}^{t}\sign(R_u)dR_u\right]+\mathbb{E}\left[L^0_t(R)\right]\\
				=&\mathbb{E}\left[\int_{0}^{t}\sign(R_u)\left(a(u,X_u)-a(\eta(u),\bar{x}_{\eta(u)})\right)du\right] \\ & +\mathbb{E}\left[\int_{0}^{t}\sign(R_u)\left(\sigma(u,X_u)^{\ga}-\sigma(\eta(u),\bar{x}_{\eta(u)})^\ga\right)dW_u\right]
				+ \mathbb{E} \left[L^0_t(R) \right].
			\end{aligned} 
		\end{equation}
		We have
		\begin{displaymath}\mathbb{E}\left[\int_{0}^{t}\sign(R_u)\left(\sigma(u,X_u)^{\ga}-\sigma(\eta(u),\bar{x}_{\eta(u)})^\ga\right)dW_u\right] =0
		\end{displaymath}
		due to Lemma \ref{bounded}, the linear growth of the diffusion coefficient and the martingale property of the It\=o integral.
		Looking at the first term, we have
		\begin{displaymath}
			\begin{aligned}
			& 	\mathbb{E}\left[\int_{0}^{t}\sign(R_u)\left(a(u,X_u)-a(\eta(u),\bar{x}_{\eta(u)})\right)du\right] \\ \quad &= \mathbb{E}\left[\int_{0}^{t}\sign(R_u)\left(a(u,X_u)-a(u,\bar{x}_{u})\right)du\right] \\
				&\qquad +  \mathbb{E}\left[\int_{0}^{t}\sign(R_u)\left(a(u,\bar{x}_{u})-a(\eta(u),\bar{x}_{u})\right)du\right] \\
					&\qquad +  \mathbb{E}\left[\int_{0}^{t}\sign(R_u)\left( a(\eta(u),\bar{x}_u)-a(\eta(u),\bar{x}_{\eta(u)})\right)du\right] 
				\\  \quad & \leq  
				 C \left( \int_{0}^{t} \mathbb{E}\left[ |R_u|\right]  du	+
				( \Delta t )^{1/2} \int_{0}^{t} \mathbb{E}\left[ 1+ |\bar{x}_u|\right] du + \int_{0}^{t} \mathbb{E}\left[ |\bar{x}_u-\bar{x}_{\eta(u)}|\right]  du \right)
			\end{aligned}
		\end{displaymath}
	using (HLG) and (L). Lemma \ref{bounded}  now implies
		\begin{equation}\label{cir-tanaka-1}
			\sup_{u \in [0,t]} \mathbb{E}\left[ \left|R_u\right|\right]  \leq  C(\Delta t)^{\frac{1}{2}} +  C \int_{0}^{t}  	\sup_{v \in [0,u]} \mathbb{E}\left[ |R_v|\right]  du + \mathbb{E}\left[L^0_t(R)\right].
		\end{equation}
		
		(ii)	With Lemma \ref{lemmadeAngelis} we can derive a bound for the expected local time of the error process $R$ in zero. Let $\delta\in(0,1)$, then
		\begin{equation}\label{local}
			\begin{aligned}
				\mathbb{E}\left[L^0_t(R)\right]\le 4\delta&-2\mathbb{E}\left[\int_{0}^{t}\left(\mathbbm{1}_{\{R_s\in(0,\delta)\}}+\mathbbm{1}_{\{R_s \geq \delta\}}e^{1-\frac{R_s}{\delta}}\right)dR_s\right]\\
				&+\frac{1}{\delta}\mathbb{E}\left[\int_{0}^{t}\mathbbm{1}_{\{R_s>\delta\}}e^{1-\frac{R_s}{\delta}}d\langle R \rangle_s\right].
			\end{aligned}
		\end{equation}
		We define $Y_s:=\mathbbm{1}_{\{R_s\in(0,\delta)\}}+\mathbbm{1}_{\{R_s \geq \delta\}}e^{1-\frac{R_s}{\delta}}$ and look at the second term of \eqref{local}, i.e.,
		\begin{displaymath}
			\mathbb{E}\left[\int_{0}^{t}Y_sdR_s\right]= \,  \mathbb{E}\left[\int_{0}^{t}Y_s\left(a(s,X_s)-a(\eta(s),\bar{x}_{\eta(s)})\right) ds\right],
		\end{displaymath}
		where we already used the martingale property of the It\=o integral. Since $0 \leq Y_s \leq 1$, we obtain proceeding as above  that 
		\begin{equation}
			\begin{aligned} \label{LT-3}
				\left| 	\mathbb{E}\left[\int_{0}^{t}Y_sdR_s\right] \right|  \leq 
				C(\Delta t)^{\frac{1}{2}} +  C \int_{0}^{t} \mathbb{E}\left[ |R_u| \right] du.
			\end{aligned}
		\end{equation}
	
	For the third term of  Equation \eqref{local}  note that
	$$ \langle R \rangle_t= \int_0^t \left(\sigma(s,X_s)^{\ga}-\sigma(\eta(s),\bar{x}_{\eta(s)})^{\ga}\right)^2 ds
, \qquad t \in [0,T].$$

Condition \eqref{ass-k-proof} implies that
$$ P(\sigma(x,X_s)=0)=0 $$
for all $s \in [0,T]$.
Thus, we can apply Lemma \ref{squareroot}, i.e.,
$$
	\left|x^\ga-y^{\ga}\right|\le 2
	x^{-(1-\ga)\beta}\left|x-y\right|^{\beta +\ga(1-\beta)}, \qquad x>0, \, y \geq 0,
$$ 
and obtain
		\begin{displaymath}
			\begin{aligned}
				\frac{1}{\delta}\mathbb{E}\left[\int_{0}^{t}\mathbbm{1}_{\{R_s>\delta\}}e^{1-\frac{R_s}{\delta}}d\langle R \rangle_s\right]&=\frac{1}{\delta}\mathbb{E}\left[\int_{0}^{t}\mathbbm{1}_{\{R_s>\delta\}}e^{1-\frac{R_s}{\delta}} \left(\sigma(s,X_s)^{\ga}-\sigma(\eta(s),\bar{x}_{\eta(s)})^{\ga}\right)^2 ds\right]\\
				&\le \frac{4}{\delta}\mathbb{E}\left[\int_{0}^{t}\mathbbm{1}_{\{R_s>\delta\}}e^{1-\frac{R_s}{\delta}}
			\frac{ |\sigma(s,X_s)-\sigma(\eta(s),\bar{x}_{\eta(s)})|^{2\beta +2 \ga(1-\beta)}}{ 	\sigma(s,X_s)^{2(1-\ga)\beta} }
				ds\right].
		\end{aligned}
	\end{displaymath}
Since 
$$ |x|+|y|+|z| \leq C_{\alpha}\left(  |x|^{\alpha} +|y|^{\alpha} +|z|^{\alpha}\right)^{1/\alpha}\qquad \textrm{for all} \quad x,y,z \in \mathbb{R},$$
for $\alpha \geq 1 $ and
$$ 2\beta +2 \ga(1-\beta) = 2(1-\ga)\beta + 2\ga,$$ it follows that
 	\begin{displaymath}
 	\begin{aligned}
 		\frac{1}{\delta}\mathbb{E}\left[\int_{0}^{t}\mathbbm{1}_{\{R_s>\delta\}}e^{1-\frac{R_s}{\delta}}d\langle R \rangle_s\right]
 		&\le \frac{C_{\beta,\ga}}{\delta}\mathbb{E}\left[\int_{0}^{t}\mathbbm{1}_{\{R_s>\delta\}}e^{1-\frac{R_s}{\delta}}
 		\frac{ |\sigma(s,X_s)-\sigma(s,\bar{x}_{s})|^{2(1-\ga)\beta + 2\ga}}{ 	\sigma(s,X_s)^{2(1-\ga)\beta} }
 		ds\right] \\  & \qquad +  \frac{C_{\beta,\ga}}{\delta}\mathbb{E}\left[\int_{0}^{t}\mathbbm{1}_{\{R_s>\delta\}}e^{1-\frac{R_s}{\delta}}
 		\frac{ |\sigma(s,\bar{x}_{s})-\sigma(\eta(s),\bar{x}_{s}) |^{2(1-\ga)\beta + 2\ga}}{ 	\sigma(s,X_s)^{2(1-\ga)\beta} }	ds\right]
 		\\  & \qquad +  \frac{C_{\beta,\ga}}{\delta}\mathbb{E}\left[\int_{0}^{t}\mathbbm{1}_{\{R_s>\delta\}}e^{1-\frac{R_s}{\delta}}
 		\frac{ |\sigma(\eta(s),\bar{x}_{s}) - \sigma(\eta(s),\bar{x}_{\eta(s)}) |^{2(1-\ga)\beta + 2\ga}}{ 	\sigma(s,X_s)^{2(1-\ga)\beta} }	ds\right].
 	\end{aligned}
 \end{displaymath}
Moreover, exploiting (HLG) and (L) yields 
	\begin{displaymath}
	\begin{aligned}
		\frac{1}{\delta}\mathbb{E}\left[\int_{0}^{t}\mathbbm{1}_{\{R_s>\delta\}}e^{1-\frac{R_s}{\delta}}d\langle R\rangle_s\right]		&\le \frac{C_{\beta,\ga}}{\delta}\mathbb{E}\left[\int_{0}^{t}\mathbbm{1}_{\{R_s>\delta\}}e^{1-\frac{R_s}{\delta}}
			\frac{ |R_s|^{2(1-\ga)\beta + 2\ga}}{ 	\sigma(s,X_s)^{2(1-\ga)\beta} }	 ds\right] \\
			& \qquad + \frac{C_{\beta,\ga}}{\delta} (\Delta t)^{\beta(1-\ga) +\ga} \, \mathbb{E}\left[\int_{0}^{t}\mathbbm{1}_{\{R_s>\delta\}}e^{1-\frac{R_s}{\delta}}
		\frac{ (1+|\bar{x}_s|)^{2(1-\ga)\beta + 2\ga}}{ 	\sigma(s,X_s)^{2(1-\ga)\beta} }	 ds\right]	
		\\
		& \qquad + \frac{C_{\beta,\ga}}{\delta}  \mathbb{E}\left[\int_{0}^{t}\mathbbm{1}_{\{R_s>\delta\}}e^{1-\frac{R_s}{\delta}}
		\frac{ |\bar{x}_s-\bar{x}_{\eta(s)}|^{2(1-\ga)\beta + 2\ga}}{ 	\sigma(s,X_s)^{2(1-\ga)\beta} }	 ds\right]	.
			\end{aligned}
		\end{displaymath}
Applying	Lemma \ref{bounded}, condition \eqref{ass-k-proof} and H\"older's inequality with
	$ p= \frac{2(1-\ga)\beta +\varepsilon}{2(1-\ga)\beta}$ and $q=\frac{p}{p-1}$ to the second and third term on the right side now gives
	\begin{equation} \label{step_main}
	\begin{aligned}
			\frac{1}{\delta}\mathbb{E}\left[\int_{0}^{t}\mathbbm{1}_{\{R_s>\delta\}}e^{1-\frac{R_s}{\delta}}d\langle R \rangle_s\right]		&\le \frac{C_{\beta,\ga}}{\delta}\mathbb{E}\left[\int_{0}^{t}\mathbbm{1}_{\{R_s>\delta\}}e^{1-\frac{R_s}{\delta}}
			\frac{ |R_s|^{2(1-\ga)\beta + 2\ga}}{ 	\sigma(s,X_s)^{2(1-\ga)\beta} }	 ds\right] 
		\\ & \qquad  + \frac{C_{\beta,\ga,\varepsilon}}{\delta} (\Delta t)^{\beta(1-\ga) +\ga} . 
		\end{aligned} \end{equation}

	(iii) It remains to study  \begin{displaymath}
		\begin{aligned}
		 \frac{C_{\beta,\ga}}{\delta}\mathbb{E}\left[\int_{0}^{t}\mathbbm{1}_{\{R_s>\delta\}}e^{1-\frac{R_s}{\delta}}
			\frac{ |R_s|^{2(1-\ga)\beta + 2\ga}}{ 	\sigma(s,X_s)^{2(1-\ga)\beta} }	 ds\right].	
		\end{aligned}
	\end{displaymath}
		For this let $\alpha\in(0,1)$. Since
		\begin{displaymath}
			\sup_{s \in [0,T]} \mathbb{E}\left[ |R_s|^p\right]  \leq C_p
		\end{displaymath}
		for all $p \geq 1$ due to our assumptions and Lemma \ref{bounded}, we have that
		\begin{equation*}
					\begin{aligned}
				&	\frac{1}{\delta}\mathbb{E}\left[\int_{0}^{t}\mathbbm{1}_{\{R_s>\delta\}}e^{1-\frac{R_s}{\delta}}
					\frac{ |R_s|^{2(1-\ga)\beta + 2\ga}}{ 	\sigma(s,X_s)^{2(1-\ga)\beta} }	 ds\right]	
		 \\ & \quad =\frac{1}{\delta}\mathbb{E}\left[\int_{0}^{t}\mathbbm{1}_{\{R_s\in(\delta,\delta^{\alpha})\}}e^{1-\frac{R_s}{\delta}}	\frac{ |R_s|^{2(1-\ga)\beta + 2\ga}}{ 	\sigma(s,X_s)^{2(1-\ga)\beta} }	 ds\right] \\ & \qquad \quad  +\frac{1}{\delta}\mathbb{E}\left[\int_{0}^{t}\mathbbm{1}_{\{R_s\ge\delta^{\alpha}\}}e^{1-\frac{R_s}{\delta}} 	\frac{ |R_s|^{2(1-\ga)\beta + 2\ga}}{ 	\sigma(s,X_s)^{2(1-\ga)\beta} }	 ds\right]\\
				& \quad \le \delta^{ \alpha(2(1-\ga)\beta + 2\ga)-1}\int_{0}^{t}\mathbb{E} \left[ \sigma(s,X_s)^{-2(1-\ga)\beta}\right]ds\\ & \qquad \quad  +C_{\beta, \ga, \varepsilon} \frac{e^{-\delta^{\alpha-1}}}{\delta}\int_{0}^{t} 	 \left( \mathbb{E} \left[ \sigma(s,X_s)^{-2(1-\ga)\beta-\varepsilon } \right] \right)^{\frac{2(1-\ga)\beta}{2(1-\ga)\beta+\varepsilon}}ds
			\end{aligned}
		\end{equation*}
		by another application of the H\"older inequality. Since 
		$$ \limsup_{\delta \rightarrow 0} \frac{e^{-\delta^{\alpha-1}}}{\delta^{p}} =0$$ for all $p\ge 0$, condition \eqref{ass-k-proof} now yields
		\begin{equation}
		\begin{aligned}
			&	\frac{1}{\delta}\mathbb{E}\left[\int_{0}^{t}\mathbbm{1}_{\{R_s>\delta\}}e^{1-\frac{R_s}{\delta}}
			\frac{ |R_s|^{2(1-\ga)\beta + 2\ga}}{ 	\sigma(s,X_s)^{2(1-\ga)\beta} }	 ds\right]	
		\leq C_{\beta,\ga, \alpha, \varepsilon }\delta^{ \alpha(2(1-\ga)\beta + 2\ga)-1}.
		 \label{LT-1}
			\end{aligned}
	\end{equation}

		Summarizing  the Equations \eqref{local} -- \eqref{LT-1} we have shown that
		\begin{equation}  \begin{aligned}\label{LT-0}
			\mathbb{E}\left[L^0_t(R)\right]  \leq 4 \delta & +	C(\Delta t)^{\frac{1}{2}} +  C \int_{0}^{t} \mathbb{E}\left[ |R_u|\right]  du \\ &  +  C_{\beta,\ga, \alpha , \varepsilon}\delta^{ \alpha(2(1-\ga)\beta + 2\ga)-1}+ \frac{C_{\beta,\ga,\varepsilon}}{\delta} (\Delta t)^{\beta(1-\ga) +\ga}. \end{aligned}
		\end{equation}
		(iv) Setting $\delta = (\Delta t)^{1/2}$ gives
		\begin{equation*} 
				\mathbb{E}\left[L^0_t(R)\right]  \leq    C  (\Delta t)^{1/2} + C \int_{0}^{t} \mathbb{E}\left[ |R_u| \right] du +  C_{\beta,\ga, \alpha,  \varepsilon }(\Delta t)^{ \alpha((1-\ga)\beta + \ga)-\frac{1}{2}}
				+ C_{\beta,\ga,\varepsilon} (\Delta t)^{\beta(1-\ga) +\ga-\frac{1}{2}}. 
		\end{equation*}	 
	Note that
	$$  \alpha (\beta(1-\ga)+\ga) - \frac{1}{2} < \beta(1-\ga)+\ga- \frac{1}{2} \leq \frac{1}{2}, $$
	since we have $\alpha  \in (0,1)$ and $\beta \in (0,1]$. Consequently, we have
		\begin{equation*} 
		\mathbb{E}\left[L^0_t(R)\right]  \leq     C \int_{0}^{t} \mathbb{E}\left[ |R_u| \right] du 
		+ C_{\beta,\ga, \alpha, \varepsilon} (\Delta t)^{ \alpha (\beta(1-\ga) +\ga)-\frac{1}{2}}. 
	\end{equation*}	
		Combining this with Equation \eqref{cir-tanaka-1} yields
		\begin{displaymath}
			\sup_{u \in [0,t]}	\mathbb{E}\left[ \left|R_u\right|\right]  \leq  C_{\beta,\ga, \alpha, \varepsilon} (\Delta t)^{ \alpha (\beta(1-\ga) +\ga)-\frac{1}{2}}  +  C \int_{0}^{t} \sup_{v \in [0,u]}\mathbb{E}\left[   |R_v|\right]  du 
		\end{displaymath}
		and the assertion follows  by choosing $\alpha \in (0,1)$ sufficiently large and an application of Gronwall's lemma.
		
	\end{proof}

	\section{Appendix}
	
For the sake of convenience, we collect here some general auxiliary results.

	 \subsection{Feller test}
	 Consider the autonomous SDE
	 		\begin{equation}\label{SDE-auto-F}
	 		dX_t = a(X_t)dt + c(X_t)dW_t,  \quad t\in[0,T], \qquad X_0=x_0.
	 	\end{equation}
	 We define
	 \begin{align*}
	 	S := \inf\{t\ge0: X_t\notin(l,r)\}
	 \end{align*}
	 as the exit time from $I=(l,r)$ with $-\infty \leq l <r \leq \infty$. Now, {\it Feller's test for explosions} (see Theorem 5.29 in Chapter V in \cite{KS}) gives necessary and sufficient conditions for the finiteness of $S$.
	 \begin{theorem}\label{thm:feller-test}
	 	Assume that the coefficients $a:I\rightarrow\mathbb{R}$ and $c:I\rightarrow\mathbb{R}$ satisfy
	 	\begin{align*}
	 		\forall x\in I:\quad	c^2(x)>0,
	 	\end{align*}
	 	and
	 	\begin{align*}
	 		\forall x\in I \,\, \,\, \exists \varepsilon>0: \int_{x-\varepsilon}^{x+\varepsilon}\frac{1+|a(y)|}{c^2(y)}dy<\infty.
	 	\end{align*}
	 	Define the {\it scale function} $p$ by
	 	\begin{align*}
	 		p(x) := \int_{o}^{x} \exp\left(-2\int_{o}^{y}\frac{a(z)}{c^2(z)}dz\right)dy, \quad x\in I,
	 	\end{align*}
	 	for a fixed $o\in I$ and the function $v$ by
	 	\begin{align*}
	 		v(x) := \int_{o}^{x} p'(y)\int_{o}^{y}\frac{2}{p'(z)c^2(z)}dzdy, \quad x\in I.
	 	\end{align*}
	 	Let $(X,W)$ be a weak solution of \eqref{SDE-auto-F} in $I$ with deterministic $X_0=x_0\in I$. Then, $P\left(S=\infty\right)=1$ or ${P}\left(S=\infty\right)<1$ according to whether
	 	\begin{align*}
	 		v(l+)=\lim_{x\downarrow l}v(x)=\lim_{x\uparrow r}v(x)=v(r-)=\infty
	 	\end{align*}
	 	or not.
	 \end{theorem}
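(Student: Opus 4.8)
This is the classical Feller test, so the plan is to reproduce the standard argument, organised around the \emph{scale function} $p$ and the function $v$ of the statement. First I record the elementary facts: under the stated local integrability of $(1+|a|)/c^{2}$ and positivity of $c^{2}$ on $I$, the functions $p$ and $v$ are well defined and of class $C^{2}$ on $I$, $p$ is strictly increasing, and differentiating the explicit formula gives $\tfrac12 c^{2}v''+av'=1$ on $I$ together with $v(o)=v'(o)=0$. Hence, by It\=o's formula, $v(X_t)-t=v(x_0)+\int_0^{t}(v'c)(X_s)\,dW_s$ on $[0,S)$. Fixing an increasing sequence of compact subintervals $[l_n,r_n]\uparrow I$ and letting $S_n$ be the first exit time of $X$ from $(l_n,r_n)$, the process $v'c$ is bounded on each $[l_n,r_n]$, so $v(X_{t\wedge S_n})-(t\wedge S_n)$ is a true martingale and $\mathbb{E}[v(X_{t\wedge S_n})]=v(x_0)+\mathbb{E}[t\wedge S_n]\le v(x_0)+t$.

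For the implication ``$v(l+)=v(r-)=\infty\ \Rightarrow\ P(S=\infty)=1$'': suppose $P(S\le t)>0$ for some $t>0$. On $\{S\le t\}$ one has $t\wedge S_n=S_n$ eventually and $X_{S_n}$ converges to $l$ or to $r$, so $v(X_{t\wedge S_n})\to\infty$ there because $v$ blows up at both endpoints; on $\{S>t\}$ one has $v(X_{t\wedge S_n})\to v(X_t)<\infty$. Fatou's lemma applied to the displayed bound then forces $\infty\cdot P(S\le t)\le v(x_0)+t<\infty$, which is absurd; hence $P(S=\infty)=1$.

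For the converse, assume without loss of generality $v(r-)<\infty$; the explicit formula for $v$ then forces $p(r-)<\infty$ as well. Fix $l_0\in(l,x_0)$, let $\tau=\inf\{t:X_t=l_0\}$, and consider the problem on the subinterval $(l_0,r)$: its analogue $\widetilde v$ of $v$ (same ODE, $\widetilde v(l_0+)=0$) differs from $v$ by an affine function of $p$, hence $\widetilde v(r-)<\infty$ and $\widetilde v$ is bounded and nonnegative on $(l_0,r)$. Running the martingale argument above with $\widetilde v$ and the stopping times $\tau\wedge S_n$ gives $\mathbb{E}[\tau\wedge S]\le \sup_{(l_0,r)}\widetilde v<\infty$, so $\tau\wedge S<\infty$ almost surely. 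Since $Y_t=p(X_t)$ is then a bounded, hence uniformly integrable, martingale on $[0,\tau\wedge S]$ with values in $[p(l_0),p(r-)]$, optional stopping yields $P(S<\tau)=\frac{p(x_0)-p(l_0)}{p(r-)-p(l_0)}\in(0,1)$, i.e.\ with positive probability the process explodes towards $r$ before hitting $l_0$; thus $P(S=\infty)<1$. The case $v(l+)<\infty$ is symmetric.

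I expect the only genuinely delicate points to lie in this converse: verifying $v(r-)<\infty\Rightarrow p(r-)<\infty$, the affine comparison of $\widetilde v$ with $v$ near $r$, and the justification of optional stopping (which is exactly why one first needs $\tau\wedge S<\infty$ a.s.); everything else is routine localization and Fatou-type arguments. Since the statement coincides with Theorem~5.29 in Chapter~V of \cite{KS} --- whose proof additionally runs through the time-change representation of $p(X)$ as a Brownian motion --- I would in the final write-up simply invoke that reference rather than reproduce the details.
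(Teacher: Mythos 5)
The paper offers no proof of this statement at all---it is quoted directly as Theorem~5.29 in Chapter~V of \cite{KS}---so your concluding plan to simply invoke that reference is exactly the paper's approach. Your accompanying sketch (It\=o's formula for $v$, localization and Fatou for the direction $v(l+)=v(r-)=\infty\Rightarrow P(S=\infty)=1$; the implication $v(r-)<\infty\Rightarrow p(r-)<\infty$, the affine-in-$p$ comparison for $\widetilde v$, and optional stopping of the bounded martingale $p(X)$ for the converse) is a correct outline of the standard argument, so no gap needs to be flagged.
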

	\subsection{A deterministic time change}

	The following auxiliary result deals with deterministic time changes for Riemann and It\=o integrals.
	For this, let $\theta \in \mathcal{C}([0,T]; (0,\infty))$ and define
	$$\Theta:[0,T] \rightarrow \mathbb{R}, \qquad \Theta(t)=\int_0^t \theta^2(s) ds. $$
	Since $\theta^2(t)>0$ for all $t \in [0,T]$,  the function $\Theta$ is  strictly increasing. Therefore its inverse function $A=\Theta^{-1}: D \rightarrow [0,T]$  with $ {D}= \Theta([0,T])$ exists. Then,
	$$ \widetilde{W}_t= \int_0^{A(t)} \theta(s)dW_s, \qquad t \in [0,A^{-1}(T)],$$
	is a Brownian motion adapted to the filtration $(\widetilde{\mathcal{F}}_t)_{t \in [0,A^{-1}(T))]}$ with $\widetilde{\mathcal{F}}_t= \mathcal{F}_{A(t)}$. See, e.g., Proposition 4.6 in Chapter III of \cite{KS}.
	Moreover, we have the following:
	\begin{proposition} \label{prop:time-change}
		Consider an continuous and adapted process $Y=(Y_t)_{t \in [0,T]}$ such that 
		$$ \int_0^T \mathbb{E} |Y_s|^2 ds < \infty$$ and let $\theta \in \mathcal{C}([0,T]; (0,\infty))$. Then, we have
		$$ \int_0^{A(t)} Y_s ds=    \int_0^t Y_{A(\tau)} \frac{1}{\theta^2(A(\tau))}d \tau, \qquad t \in [0,A^{-1}(T)], \qquad P-a.s.,$$
		and 
		$$ \int_0^{A(t)} \theta(s) Y_s d W_s=     \int_0^t Y_{A(\tau)} d \widetilde{W}_{\tau}, \qquad t \in [0,A^{-1}(T)], \qquad P-a.s..$$
	\end{proposition}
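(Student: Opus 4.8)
The plan is to establish the two identities separately: the Riemann-integral one by a pathwise deterministic substitution, and the It\=o-integral one by the standard approximation argument for stochastic integrals. For the first identity, note that $\Theta$ is continuously differentiable on $[0,T]$ with $\Theta'=\theta^2>0$, so by the inverse function theorem $A=\Theta^{-1}\in\mathcal{C}^1([0,A^{-1}(T)])$ with $A'(\tau)=1/\Theta'(A(\tau))=1/\theta^2(A(\tau))$; since $\theta$ is continuous on the compact interval $[0,T]$ we have $0<\min_{[0,T]}\theta\le\max_{[0,T]}\theta<\infty$, so $A$ is in fact bi-Lipschitz. For each fixed $\omega$ the map $s\mapsto Y_s(\omega)$ is continuous, hence the ordinary change of variables $s=A(\tau)$ gives
\[
\int_0^{A(t)}Y_s(\omega)\,ds=\int_0^t Y_{A(\tau)}(\omega)\,A'(\tau)\,d\tau=\int_0^t Y_{A(\tau)}(\omega)\,\frac{1}{\theta^2(A(\tau))}\,d\tau,\qquad t\in[0,A^{-1}(T)],
\]
which is the first claim; the same substitution applies verbatim to any nonnegative measurable integrand, a fact I use below.

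For the second identity, both stochastic integrals are well-defined $L^2$-martingales: boundedness of $\theta$ together with $\mathbb{E}\int_0^T Y_s^2\,ds<\infty$ give $\mathbb{E}\int_0^T\theta^2(s)Y_s^2\,ds<\infty$, while the substitution from the first part (taken in expectation) gives $\mathbb{E}\int_0^{A^{-1}(T)}Y_{A(\tau)}^2\,d\tau=\mathbb{E}\int_0^T Y_s^2\,ds<\infty$; moreover $\tau\mapsto Y_{A(\tau)}$ is continuous and adapted to $(\widetilde{\mathcal{F}}_\tau)$ since $Y$ is continuous and adapted and $\widetilde{\mathcal{F}}_\tau=\mathcal{F}_{A(\tau)}$, with $\widetilde W$ an $(\widetilde{\mathcal{F}}_\tau)$-Brownian motion as recalled above. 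I would first check the identity for an elementary integrand $Y_s=\sum_{i=0}^{m-1}\xi_i\mathbbm{1}_{(s_i,s_{i+1}]}(s)$, where $0=s_0<\dots<s_m=T$ and $\xi_i$ is bounded and $\mathcal{F}_{s_i}$-measurable. Using $\widetilde W_{\Theta(s_i)\wedge t}=\int_0^{s_i\wedge A(t)}\theta(s)\,dW_s$ (which follows from $A\circ\Theta=\mathrm{id}$ and monotonicity of $A$) one gets
\[
\int_0^{A(t)}\theta(s)Y_s\,dW_s=\sum_{i=0}^{m-1}\xi_i\int_{s_i\wedge A(t)}^{s_{i+1}\wedge A(t)}\theta(s)\,dW_s=\sum_{i=0}^{m-1}\xi_i\bigl(\widetilde W_{\Theta(s_{i+1})\wedge t}-\widetilde W_{\Theta(s_i)\wedge t}\bigr),
\]
whereas $Y_{A(\tau)}=\sum_{i=0}^{m-1}\xi_i\mathbbm{1}_{(\Theta(s_i),\Theta(s_{i+1})]}(\tau)$ is an elementary integrand for the partition $0=\Theta(s_0)<\dots<\Theta(s_m)=A^{-1}(T)$ with $\xi_i$ being $\widetilde{\mathcal{F}}_{\Theta(s_i)}=\mathcal{F}_{A(\Theta(s_i))}=\mathcal{F}_{s_i}$-measurable, so $\int_0^t Y_{A(\tau)}\,d\widetilde W_\tau$ equals the same sum.

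For a general continuous adapted $Y$ with $\mathbb{E}\int_0^T Y_s^2\,ds<\infty$, I would then take elementary $Y^{(n)}$ with $\mathbb{E}\int_0^T|Y_s-Y^{(n)}_s|^2\,ds\to0$; applying the deterministic substitution to $|Y_s-Y^{(n)}_s|^2$ gives $\mathbb{E}\int_0^{A^{-1}(T)}|Y_{A(\tau)}-Y^{(n)}_{A(\tau)}|^2\,d\tau\to0$, and boundedness of $\theta$ gives $\mathbb{E}\int_0^T\theta^2(s)|Y_s-Y^{(n)}_s|^2\,ds\to0$, so by the It\=o isometry and Doob's inequality both sides of the identity for $Y^{(n)}$ converge uniformly in $t$ in $L^2(\Omega)$ to the corresponding sides for $Y$; equality therefore persists in the limit and, by continuity in $t$, holds $P$-a.s.\ simultaneously for all $t\in[0,A^{-1}(T)]$. (Alternatively one could cite a general time-change theorem for stochastic integrals, but the route above is short and self-contained.) I do not expect a genuine obstacle here; the only points requiring care are the index bookkeeping in the elementary step — in particular the measurability identity $\widetilde{\mathcal{F}}_{\Theta(s_i)}=\mathcal{F}_{s_i}$ that makes $Y_{A(\cdot)}$ a legitimate $\widetilde W$-integrand — and the remark that the substitution from the first identity is exactly what equates the two relevant $L^2$-norms, so the approximation closes.
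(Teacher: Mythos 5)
Your argument is correct, but it takes a more self-contained route than the paper. The paper disposes of the proposition in one line: the first identity is the classical substitution rule for Riemann integrals (exactly your pathwise change of variables $s=A(\tau)$ with $A'(\tau)=1/\theta^2(A(\tau))$), and the second identity is obtained by citing the general time-change theorem for stochastic integrals, Proposition 4.8 in Chapter III of Karatzas--Shreve, applied to the strictly increasing deterministic clock $\Theta$. You instead reprove that special case from scratch: verification on elementary integrands (where the bookkeeping $\widetilde W_{\Theta(s_i)\wedge t}=\int_0^{s_i\wedge A(t)}\theta(s)\,dW_s$ and $\widetilde{\mathcal F}_{\Theta(s_i)}=\mathcal F_{s_i}$ is exactly right), followed by an $L^2$ approximation closed via the It\=o isometry and Doob's inequality, with a.s.\ simultaneity in $t$ from path continuity. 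What the citation buys is brevity and coverage of more general time changes; what your construction buys is that the reader sees why the deterministic substitution is precisely the statement that the two It\=o isometries agree, at the cost of a page of standard approximation. One cosmetic slip: the displayed equality $\mathbb{E}\int_0^{A^{-1}(T)}Y_{A(\tau)}^2\,d\tau=\mathbb{E}\int_0^T Y_s^2\,ds$ should read $\mathbb{E}\int_0^{A^{-1}(T)}Y_{A(\tau)}^2\,d\tau=\mathbb{E}\int_0^T \theta^2(s)\,Y_s^2\,ds$ (substitute $\tau=\Theta(s)$, $d\tau=\theta^2(s)\,ds$); this does not affect anything, since boundedness of $\theta$ still yields finiteness and the corrected identity is exactly the matching of the two $L^2$-norms that you invoke at the end.
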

	\begin{proof}
		The first statement is the classical substitution rule for Riemann integrals, while the second statement follows from Proposition  4.8 in Chapter III of \cite{KS}.
	\end{proof}

	\subsection{Moment bounds for CIR, WF and CKLS}\label{subsec:classical_bounds} 
	
Here we collect moment bounds for the CIR, WF  and CKLS process. Note that under the assumptions below the Feller test implies that with probability one the  CIR and CKLS process are positive, i.e., take values in $[0,\infty)$, while the WF process takes values in $[0,1]$.
	\begin{lemma}\label{lem:classcial}
		\begin{itemize}
			\item[(i)] Consider the CIR process given by  
			\begin{displaymath}
				dV_t=\kappa\left(\lambda-V_t\right)dt+\theta\sqrt{V_t}dW_t \quad t\in[0,T], \qquad V_0=v_0>0,
			\end{displaymath}
			with $\kappa, \lambda, \theta >0$. Then we have
			\begin{displaymath}
				\sup_{s \in[0,T]}	\mathbb{E} |V_s|^p<\infty \quad \textrm{for all} \quad p > - \frac{2\kappa \lambda}{\theta^2}.
			\end{displaymath} 
			\item[(ii)] Consider the WF process given by
			\begin{displaymath}
				dV_t=\kappa\left(\lambda-V_t\right)dt+ \theta\sqrt{V_t(1-V_t)}dW_t \quad t\in[0,T], \qquad V_0=v_0\in (0,1),
			\end{displaymath}
			with $\kappa, \lambda, \theta >0$. Then we have
			\begin{displaymath}
				\sup_{s \in[0,T]}\mathbb{E}  |V_s|^p <\infty \quad \textrm{for all} \quad p > -   \frac{ 2\kappa \lambda}{\theta^2}  \
			\end{displaymath} and
			\begin{displaymath}
			\sup_{s \in[0,T]}\mathbb{E} |1-V_s|^p  <\infty \quad \textrm{for all} \quad p > -   \frac{ 2\kappa}{\theta^2} (1-\lambda) .
		\end{displaymath} 
			\item[(iii)] Let $\ga \in (1/2,1)$. Consider the CKLS process given by
			\begin{displaymath}
				dV_t=\kappa\left(\lambda-V_t\right)dt+ \theta V_t^{\ga}dW_t \quad t\in[0,T], \qquad V_0=v_0>0,
			\end{displaymath}
			with $\kappa, \lambda, \theta >0$. Then we have
			\begin{displaymath}
				\sup_{s \in[0,T]}\mathbb{E}  |V_s|^p <\infty \quad \textrm{for all} \quad p \in \mathbb{R} .
			\end{displaymath} 
		\end{itemize}	
	\end{lemma}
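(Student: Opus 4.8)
The plan is to prove part (iii) directly by a Lyapunov-function argument, and to reduce parts (i) and (ii) to the classical explicit transition laws of the CIR and Wright--Fisher (Jacobi) diffusions; the second bound in (ii) will then follow from the first by the symmetry $V\mapsto 1-V$.

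For (iii) I would apply It\=o's formula to $f(x)=x^p$ with $p<0$. The generator of the CKLS process acts on $f$ as
\begin{equation*}
\mathcal{L}f(x)=x^{p-1}\Bigl(p\kappa\lambda-p\kappa x+\tfrac12 p(p-1)\theta^2 x^{2\ga-1}\Bigr),\qquad x>0.
\end{equation*}
Since $\ga\in(1/2,1)$ we have $2\ga-1>0$, so the bracket tends to $p\kappa\lambda<0$ as $x\downarrow 0$; hence $\mathcal{L}f\le 0$ on a neighbourhood of $0$, while on the complement each of the three monomials occurring in $\mathcal{L}f$ is continuous and vanishes at $+\infty$ (the exponents $p-1$, $p$ and $p-2+2\ga$ all being negative). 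Consequently $\mathcal{L}f\le C$ on $(0,\infty)$ for some constant $C$. Localising at $\tau_n=\inf\{t:V_t\notin(1/n,n)\}$ --- here $V$ stays in $(0,\infty)$ by the Feller test, Theorem \ref{thm:feller-test}, and does not explode since $x\mapsto\theta x^\ga$ has sublinear growth --- and taking expectations gives $\mathbb{E}[V_{t\wedge\tau_n}^p]\le v_0^p+CT$; letting $n\to\infty$ and applying Fatou yields $\sup_{s\in[0,T]}\mathbb{E}[V_s^p]\le v_0^p+CT<\infty$. No Gronwall inequality is needed here, since the $x^p$-term of $\mathcal{L}f$ is itself bounded.

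For (i) I would use that, up to a deterministic scaling and time change, $V_t$ is a squared Bessel process of dimension $d=4\kappa\lambda/\theta^2$ started from $v_0>0$ (equivalently, $V_t$ has a noncentral $\chi^2$-law): if $Z$ is $\mathrm{BESQ}^d$ with $Z_0=v_0$ and $\phi(t)=\frac{\theta^2}{4\kappa}(e^{\kappa t}-1)$, then $V_t=e^{-\kappa t}Z_{\phi(t)}$, hence $\mathbb{E}[V_t^{-q}]=e^{q\kappa t}\mathbb{E}[Z_{\phi(t)}^{-q}]$. The negative moments of $\mathrm{BESQ}^d$ from a positive point are finite exactly for $q<d/2$: this is read off from the transition density, which behaves like $z^{d/2-1}$ as $z\downarrow 0$, so that $z\mapsto z^{-q}q_s(v_0,z)$ is integrable near $0$ iff $q<d/2=2\kappa\lambda/\theta^2$; equivalently one may use $\mathbb{E}[e^{-uV_t}]\le(1+c(t)u)^{-2\kappa\lambda/\theta^2}$ with $c(t)=\frac{\theta^2}{2\kappa}(1-e^{-\kappa t})$ together with $\mathbb{E}[V_t^{-q}]=\frac{1}{\Gamma(q)}\int_0^\infty u^{q-1}\mathbb{E}[e^{-uV_t}]\,du$. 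Uniformity over $t\in[0,T]$ follows because $\phi$ is continuous and, as $t\downarrow 0$, $\mathbb{E}[Z_{\phi(t)}^{-q}]\to v_0^{-q}$. Part (ii) is handled the same way, with the Jacobi transition density in place of the squared Bessel density (or with the $\mathrm{Beta}(2\kappa\lambda/\theta^2,\,2\kappa(1-\lambda)/\theta^2)$ stationary law together with an upper bound of the transition density by the stationary one); the threshold $q<2\kappa\lambda/\theta^2$ comes from the $y^{\,2\kappa\lambda/\theta^2-1}$ behaviour of the density near $y=0$. Finally, $U_t:=1-V_t$ solves the Wright--Fisher SDE with $\lambda$ replaced by $1-\lambda$ and driven by $-W$, so the bound for $\mathbb{E}[|1-V_s|^p]$ is precisely the first Wright--Fisher bound applied to $U$.

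The main obstacle is obtaining the \emph{sharp} thresholds in (i) and (ii). The naive choice $f(x)=x^p$ makes $\mathcal{L}f$ bounded above only when the bracket analogous to that in (iii) has the right sign; for the CIR generator (where $2\ga-1=0$) this forces $p\ge 1-2\kappa\lambda/\theta^2$, which is strictly weaker than $p>-2\kappa\lambda/\theta^2$ and even vacuous when $2\kappa\lambda/\theta^2\le 1$. Reaching the optimal range therefore genuinely requires the explicit transition laws, and one must take some care with the uniformity in $t$ near $t=0$, where the time change degenerates but the strictly positive starting point rescues the estimate.
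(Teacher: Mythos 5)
Your route is correct in substance but genuinely different from the paper's: the paper does not prove this lemma at all, it simply cites the literature (Section 3 of \cite{DNS} and Theorem 3.1 of \cite{HK} for the CIR bound, Sections 3.4--3.5 of \cite{NS} and \cite{BER} for the WF and CKLS bounds), whereas you reconstruct the estimates. Your part (iii) is a clean, self-contained Lyapunov argument: for $p<0$ the bracket in $\mathcal{L}x^p$ indeed tends to $p\kappa\lambda<0$ at the origin because $2\gamma-1>0$, all remaining monomials have negative exponents, and the localization at $\tau_n$ plus Fatou is legitimate since the Feller test keeps $V$ in $(0,\infty)$; this is essentially the mechanism behind the cited results and correctly isolates why CKLS admits \emph{all} inverse moments while CIR does not. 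Your part (i) via the squared-Bessel/noncentral-$\chi^2$ representation (or the Laplace-transform identity $\mathbb{E}[V_t^{-q}]=\Gamma(q)^{-1}\int_0^\infty u^{q-1}\mathbb{E}[e^{-uV_t}]\,du$) gives the sharp threshold $q<2\kappa\lambda/\theta^2$, and you rightly flag that the crude bound $(1+c(t)u)^{-2\kappa\lambda/\theta^2}$ is useless near $t=0$, where one must keep the factor $\exp\bigl(-uv_0e^{-\kappa t}/(1+c(t)u)\bigr)$ coming from the positive start; spelling that small-$t$ estimate out is the one place in (i) where your sketch still owes a computation. The weakest link is part (ii): the Jacobi diffusion has no closed-form transition density, only a spectral series, and the asserted domination of the transition density by the stationary $\mathrm{Beta}$ density (or the stated $y^{2\kappa\lambda/\theta^2-1}$ boundary behaviour, uniformly for $t\in(0,T]$) is a nontrivial fact that would itself need a citation or an argument --- which is precisely what the reference \cite{NS} the paper leans on supplies; alternatively one can mimic the Laplace/Lyapunov analysis near each boundary rather than invoking density bounds. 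Your reduction of the second WF bound to the first via $U=1-V$, $\lambda\mapsto 1-\lambda$, $W\mapsto -W$ is correct and mirrors the reflection used in the paper's Proposition 2.2. In short: what you gain is a mostly self-contained proof with the sharp thresholds and a transparent explanation of why the naive $x^p$-Lyapunov argument cannot reach them for CIR/WF; what the paper's citation approach buys is exactly the two points you gloss over, namely the uniform-in-$t$ control near $t=0$ and the boundary behaviour of the Wright--Fisher transition function.
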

	\begin{proof} (i) This statement can be found, e.g., in  Section 3 in \cite{DNS} and Theorem 3.1 in \cite{HK}, respectively. \\
		(ii)  This statement can be found, e.g., in Section 3.5 in \cite{NS}.
		(iii) This statement can be found, e.g., in Section 3.4 in \cite{NS} and \cite{BER}, respectively.

	\end{proof}
	
	 \subsection{Comparison result}
	
	Proposition 2.18 in Chapter V of \cite{KS} gives a comparison result for one-dimensional SDEs, which we use in Section \ref{sec:examples}.
	\begin{proposition} \label{prop:comparison}
		Consider two continuous, adapted process $X^{(j)}, j=1,2$, such that
		\begin{align*}
			X^{(j)}_t=	X^{(j)}_0+\int_{0}^{t}a_j(s,X^{(j)}_s)ds + \int_{0}^{t}c(s,X^{(j)}_s)dW_s, \qquad t \in [0,T],
		\end{align*}
	holds $P$-a.s.. Assume that
	\begin{itemize}
		\item[(i)] the  coefficients $c(t,x), a_1(t,x), a_2(t,x)$ are continuous and real-valued functions on $[0,T]\times\mathbb{R}$,
		\item[(ii)] the diffusion coefficient $c(t,x)$ satisfies
		\begin{align*}
			\left|c(t,x)-c(t,y)\right|\le h\left(|x-y|\right) 
		\end{align*}
		for every $t\in[0,T]$, $x,y\in\mathbb{R}$,  where $h:[0,\infty)\rightarrow[0,\infty)$ is strictly increasing with $h(0)=0$ and
		\begin{align*}
			\int_{0}^{\varepsilon}h^{-2}(u)du = \infty\qquad \text{for all} \quad \varepsilon>0,
		\end{align*}
		\item[(iii)] $X^{(1)}_0 \leq X^{(2)}_0$ $P$-a.s.,
		\item[(iv)] $a_1(t,x)\le a_2(t,x)$ for all $t \in [0,T]$ and all $ x\in \mathbb{R}$,
		\item[(v)] either $a_1(t,x)$ or $a_2(t,x)$ satisfies condition (L), i.e., is globally Lipschitz in $x$, uniformly in $t$.
		\end{itemize}
	Then,
		\begin{align*}
		{P}\left(X^{(1)}_t\le X^{(2)}_t \,\, \text{for all } \, \, t\in[0,T] \right) = 1.
		\end{align*}

	\end{proposition}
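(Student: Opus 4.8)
This is the classical one-dimensional comparison theorem of Yamada--Watanabe type — indeed it is precisely Proposition~2.18 in Chapter~V of \cite{KS}, so the simplest route is to invoke that result after checking its hypotheses (i)--(v); for completeness, the plan would be as follows. Set $\Delta_t := X^{(1)}_t - X^{(2)}_t$, a continuous semimartingale with
\[
 d\Delta_t = \big( a_1(t,X^{(1)}_t) - a_2(t,X^{(2)}_t) \big)\,dt + \big( c(t,X^{(1)}_t) - c(t,X^{(2)}_t) \big)\,dW_t .
\]
The goal is to show $\mathbb{E}[\Delta_t^+]=0$ for every $t\in[0,T]$; since $s\mapsto\Delta_s$ is continuous, this upgrades to $P(\Delta_t\le 0 \text{ for all }t\in[0,T])=1$, which is the claim.

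First I would build the standard Yamada--Watanabe smoothing functions. Using hypothesis (ii), in particular $\int_0^\varepsilon h^{-2}(u)\,du=\infty$, pick a strictly decreasing sequence $1=a_0>a_1>\cdots\downarrow 0$ with $\int_{a_n}^{a_{n-1}} h^{-2}(u)\,du = n$, together with continuous $\rho_n$ supported in $(a_n,a_{n-1})$ satisfying $0\le\rho_n(u)\le \tfrac{2}{n\,h(u)^2}$ and $\int\rho_n=1$. Define $\psi_n\in\mathcal{C}^{2}(\mathbb{R};\mathbb{R})$ by $\psi_n(x)=\int_0^x\!\int_0^y\rho_n(u)\,du\,dy$ for $x\ge 0$ and $\psi_n(x)=0$ for $x<0$, so that $\psi_n'$ vanishes on $(-\infty,0]$, $0\le\psi_n'\le 1$, $\psi_n''(x)=\rho_n(x)\ge 0$ for $x>0$, and $\psi_n(x)\uparrow x^+$ pointwise. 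Then I would apply It\=o's formula to $\psi_n(\Delta_t)$ and take expectations, after the usual localisation making the It\=o integral a true martingale so that its expectation vanishes in the limit. On the set $\{\Delta_s>0\}$ one has $|c(s,X^{(1)}_s)-c(s,X^{(2)}_s)|^2\le h(\Delta_s)^2$ by (ii) and $\psi_n''(\Delta_s)\le \tfrac{2}{n\,h(\Delta_s)^2}$, so the quadratic-variation contribution $\tfrac12\,\mathbb{E}\big[\int_0^t\psi_n''(\Delta_s)\,d\langle\Delta\rangle_s\big]$ is at most $t/n\to 0$. For the drift contribution I would use hypothesis (v): assuming without loss of generality that $a_1$ is globally Lipschitz with constant $L$, write
\[
 a_1(s,X^{(1)}_s)-a_2(s,X^{(2)}_s)=\big(a_1(s,X^{(1)}_s)-a_1(s,X^{(2)}_s)\big)+\big(a_1(s,X^{(2)}_s)-a_2(s,X^{(2)}_s)\big),
\]
where on $\{\Delta_s>0\}$ the first bracket is bounded by $L\Delta_s$ while $0\le\psi_n'(\Delta_s)\le 1$, and the second bracket is $\le 0$ by (iv). This produces $\mathbb{E}[\psi_n(\Delta_t)]\le L\int_0^t\mathbb{E}[\Delta_s^+]\,ds+t/n$, and letting $n\to\infty$ with $\psi_n(\Delta_t)\uparrow\Delta_t^+$ gives $\mathbb{E}[\Delta_t^+]\le L\int_0^t\mathbb{E}[\Delta_s^+]\,ds$; Gronwall's lemma then forces $\mathbb{E}[\Delta_t^+]=0$.

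The main difficulty is organisational rather than conceptual: the Lipschitz and ordering hypotheses must be used exactly where they hold — the asymmetric assumption (v), demanding that only one of $a_1,a_2$ be Lipschitz, is precisely what allows the drift comparison to close — and one must secure the integrability needed to kill the martingale term and to run Gronwall (e.g.\ $\mathbb{E}[\Delta_s^+]<\infty$, which follows from a priori moment bounds on $X^{(1)},X^{(2)}$; in every place where this proposition is applied in the paper such bounds are available, cf.\ Lemma~\ref{bounded}). Since all of this is entirely standard, the cleanest write-up is to cite \cite[Chapter~V, Proposition~2.18]{KS} and simply verify conditions (i)--(v) in each concrete application.
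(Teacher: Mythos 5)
Your proposal matches the paper exactly: the paper offers no proof of this proposition but simply cites Proposition 2.18 in Chapter V of Karatzas--Shreve (\cite{KS}), which is precisely your recommended route, and your sketch of the underlying Yamada--Watanabe smoothing argument is the standard proof of that cited result. No gaps worth noting; the minor points you flag (choice of which drift is Lipschitz, localisation for the martingale term and integrability for Gronwall) are handled in the cited reference.
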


	\bibliographystyle{plain}

\end{document}